\newtheorem{theorem}{Theorem}
\newtheorem{lemma}{Lemma}
\newtheorem{proposition}{Proposition}
\newtheorem{corollary}{Corollary}
\newtheorem{example}{Example}
\newtheorem{remark}{Remark}
\newcommand{\eps}{\varepsilon}
\newcommand{\m}{M}
\newcommand{\e}{{\rm e}}
\newcommand{\R}{{\mathbb R}}
\newcommand{\V}{{\mathcal{V}}_0}
\begin{document}

\title{Completeness of the trajectories of
particles coupled to a general force field}

\author{A.M. Candela, A. Romero and
M. S\'anchez}
\date{}

\maketitle

\begin{abstract}
We analyze the extendability of the solutions to a certain second
order differential equation on a Riemannian manifold $(M,g)$,
which is defined by a general class of forces (both prescribed on
$M$ or de\-pen\-ding on the velocity). The results include the
general time--dependent anholonomic case, and further refinements
for autonomous systems or forces derived from a potential are
obtained. These extend classical results for Lagrangian and
Hamiltonian systems. Several examples show the optimality of the
assumptions as well as the utility of the results, including an
application to relativistic pp--waves.
\end{abstract}

\noindent {\small{\it Key Words}}{Dynamics of classical
particles, autonomous and non--autonomous systems, second order
differential equation on a Riemannian manifold, completeness of inextensible trajectories.}

\noindent {\small{\it MSC2010.} Primary: 34A26,
34C40. Secondary: 37C60, 53D25, 70G45, 83C75.}


\tableofcontents

\section{Introduction}
{\em Completeness} is an essential property for the curves which
are extremal of some classical Lagrangian fields or, with more
generality, which solve the differential equation satisfied by the
trajectories of the particles accelerated by different types of
forces on a Riemannian manifold $(M,g)$. Its interest appears both
from the geometrical and the mechanical point of view. Recall that
the simplest case of the geodesics of $(M,g)$ allows to understand
better the global structure of the manifold. If one assumes that
$(M,g)$ is (geodesically) complete, the incompleteness of the
trajectories suggests that an infinite amount of energy will be
consumed by the accelerating forces. A priori, this is an
undesirable property for a mechanical system, and can be used to
disregard it as a physically realistic model. Nevertheless, in
some cases the equation of such trajectories may have nice
physical interpretations. For instance, sometimes the equation of
the accelerated trajectories may be equivalent to the equation of
the geodesics of a Lorentzian manifold (see, for example,
\cite{CFS}, \cite{RoSa}, \cite{SaGRG98} or the review \cite{CaSa}). So, the
incompleteness of such trajectories may be connected to the
lightlike or timelike incompleteness of some physically reasonable
spacetimes, and therefore, it can be related to the celebrated
relativistic theory of singularities. In this paper, we are
providing several general criteria which ensure the completeness of a
wide class of trajectories of accelerated particles in a
Riemannian setting, being its optimality discussed by means of
several examples. Essentially, this topic has remained dormant
since the results in the seventies. So, we use a simple approach
and language, which makes apparent the unsolved questions in that
epoch, and possible lines of future research are also pointed out.

\subsection{Setting}

More precisely, let $(\m,g)$ be a (connected, finite--dimensional)
Riemannian manifold and denote by $\pi : \m \times \R
\longrightarrow \m$ the natural projection. Giving a (1,1) smooth
tensor field $F$ along $\pi$ and a smooth vector field $X$ along
$\pi$, let us consider the second order differential equation
\[
\hspace*{2.9cm}\frac{D\dot\gamma}{dt}(t)\ =\ F_{(\gamma(t),t)}\
\dot\gamma(t) + X_{(\gamma(t),t)},\hspace*{2.9cm}\mathrm{(E)}
\]
where $D/dt$ denotes the covariant derivative along $\gamma$
induced by the Levi--Civita connection of $g$ and $\dot\gamma$
represents the velocity field along $\gamma$. Observe that (E)
describes the dynamics of a classical particle under the action of
a force field $F$, which linearly depends on its velocity, and an
external force field $X$, which is independent of the motion of
the particle. In the case when both $F$ and $X$ are time
independent, the previous equation reads
\[
\hspace*{3.55cm}\frac{D\dot\gamma}{dt}(t)\ =\ F_{\gamma(t)}
\dot\gamma(t) + X_{\gamma(t)}\hspace*{3.55cm}\mathrm{(E_0)}
\]
and is called the \textit{autonomous} equation, using
the term \textit{non--autonomous} equation if at least one between
$F$ and $X$ is time dependent.

Taking $p\in \m$ and $v\in T_p\m$, there exists a unique
inextensible smooth curve $\gamma : I \to \m$, $0\in I$, solution
of (E) which satisfies the initial conditions
\[
\gamma(0) = p,\quad \dot\gamma(0) = v.
\]
 Such a curve is called {\sl complete} if $I = \R$ and
{\sl forward} (resp. {\sl backward}) {\sl complete} when $I = (a,b)$ with
$b=+\infty$ (resp. with $a=-\infty$).  As far as we know, only the
(holonomic) case when $F=0$ and $X$ comes from the
gradient of a potential function $V$, has been systematically
studied in the literature (see \cite{AM}, \cite{Go}, \cite{WM}). Even more,
accurate results have been stated only for a
time--independent potential (see \cite[Chapter 3]{AM}), being the
results for the non--autonomous case rather vague (see \cite{Go}).
Our study will  cover all the previous cases,
specially the anholonomic and time--dependent ones.

\subsection{Interpretations}

For the interpretation of $F$, recall that it can be decomposed
as
\[
F\ =\ S+H,
\]
where $S$ is the self--adjoint part of $F$ with respect to
$g$, and $H$ the skew--adjoint one. On one hand, the
bound of the eigenvalues of $S$ (which may vary with $(p,t)\in
\m\times \R$) becomes natural to ensure that the $F$--forces will
not carry out an infinite work in a finite time. Frictional forces
are typically proportional to the velocity and opposed to it, so,
they can be described by means of some $S$ with non--positive
eigenvalues. On the other hand, magnetic fields may be
classically described by the skew--adjoint part $H$ (see \cite{LL}).

In this paper our approach differs from the previous ones in
\cite{AM}, \cite{Go} where Lagrangian or Hamiltonian techniques are used.
In fact, we focus directly  on the interpretation of the velocity
of each trajectory for equation (E) as an integral curve of a
certain vector field $G$ ({\em second order equation}) on
$TM\times \R$.
This is carried out first in the autonomous case (Section
\ref{s2}), where the vector field can be redefined just on $TM$,
extending the well--known geodesic vector field in Riemannian
geometry (or the Lagrangian vector field for regular Lagrangians).
In the non--autonomous case (Section \ref{s3}), we show how  the
results and techniques of the autonomous case can be adapted to
the vector field $G$ on $TM\times \R$. Even though this
possibility was pointed out by Gordon \cite{Go} in the framework
of Hamiltonian systems, our approach is  more direct and accurate.
In both cases (autonomous and
non--autonomous), we include a special study of the case when the
force vector field can be derived from a potential.

\subsection{Statement of the main results}

Some notions are needed to state our results. Put in the
time--in\-de\-pen\-dent case
\[
S_{\mathrm{sup}} := \sup_{\underset{\|v\|=1}{v\in T\m}} g(v,S v),
\quad S_{\mathrm{inf}} := \inf_{\underset{\|v\|=1}{v\in T\m}}
g(v,S v), \quad \| S \| : =
\max\{|S_{\mathrm{sup}}|,|S_{\mathrm{inf}}|\}.
\]
In the non--autonomous case, consider the analogous notions
$S_{\mathrm{sup}}(t)$, $S_{\mathrm{inf}}(t)$, $\| S(t) \|$
computed for each slice $M\times \{t\}$. We say that $S$ is {\em
bounded} (resp. {\em upper bounded}; {\em lower bounded}) {\em
along finite times} when, for each $T>0$ there exists a constant
$N_T$ such that for all $t\in [-T,T]$ we have
\begin{equation}\label{bf}
\|S(t)\|  < N_T \;\; (\hbox{resp.} \;
S_{\mathrm{sup}}(t)<N_T ;\; \; S_{\mathrm{inf}}(t)>- N_T).
\end{equation}
Moreover, let $d$ be the distance canonically associated to the
Riemannian metric $g$. We say that (the norm of) a vector field
$X$ on $M$ {\em grows at most linearly in $\m$} if there exist
some constants $A,C>0$ such that
\begin{equation}\label{bx}
|X|_{p}\ :=\ \sqrt{g(X_p,X_p)}\ \leq\ A \
d(p,p_0)  + C \quad \text{for all} \quad p\in \m,
\end{equation}
for some fixed $p_0\in M$. With more generality, in the
non--autonomous case we say that a vector field $X$ along $\pi$,
{\em grows at most linearly in $\m$ along finite times} if for
each $T>0$ there exist $p_0\in M$ and some constants $A_T, C_T>0$
such that
\begin{equation}\label{bx2}
|X|_{(p,t)}\ \leq\ A_T \ d(p,p_0) +  C_T \quad \text{for all} \quad
(p,t)\in \m\times [-T,T].
\end{equation}
Obviously, conditions \eqref{bx}, \eqref{bx2} are independent of
the chosen point $p_0$.

 Our main result can be stated as follows.

\begin{theorem}\label{A1}
Let $(\m,g)$ be a complete Riemannian manifold and consider a
$(1,1)$ tensor field $F$ and a vector field $X$ both
time--dependent and smooth.

If $X$ grows at most linearly in $\m$ along finite times and the
self--adjoint part $S$ of $F$ is bounded (resp. upper bounded;
lower bounded) along finite times, then each inextensible solution
of {\rm (E)} must be complete (resp. forward complete; backward
complete).

In particular, if $\m$ is compact then any inextensible solution of
{\rm(E)} is complete for any $X$ and $F$.
\end{theorem}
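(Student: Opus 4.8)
The plan is to argue by contradiction, combining an \emph{a priori} bound on the solution with the standard extension theorem for integral curves of a smooth vector field. Suppose some inextensible solution $\gamma$ of {\rm (E)} is not forward complete; then its maximal interval of definition is $I=(a,b)$ with $b<+\infty$, and $\gamma$ lifts to an integral curve of the second order vector field $G$ — namely $t\mapsto\dot\gamma(t)\in T\m$ in the autonomous case, or $t\mapsto(\dot\gamma(t),t)\in T\m\times\R$ in the non-autonomous one. If this integral curve stayed inside a single compact subset of the phase space on $[0,b)$, it would extend past $b$, contradicting the inextensibility of $\gamma$; so it suffices to produce such a compact set. Since $b<+\infty$, the $\R$-factor stays in the compact interval $[0,b]$, so the task reduces to showing that, on $[0,b)$, the curve $\gamma$ stays in a bounded subset of $\m$ and $|\dot\gamma|$ stays bounded: by the Hopf--Rinow theorem (here completeness of $(\m,g)$ enters) these two facts confine $\dot\gamma(t)$ to a fixed compact subset of $T\m$. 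Finally, since {\rm (E)} is invariant under the time reversal $t\mapsto-t$ — which sends the self-adjoint part $S$ of $F$ to $-S$, so that ``lower bounded along finite times'' becomes ``upper bounded along finite times'', and preserves the linear growth of $X$ along finite times — it is enough to treat the case ``$S$ upper bounded along finite times'' and deduce forward completeness; the lower bounded and bounded cases then follow.

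For the \emph{a priori} estimate, fix $T:=b$, choose $N_T>0$ with $S_{\mathrm{sup}}(t)<N_T$ for all $t\in[-T,T]$ as in \eqref{bf}, and take $p_0\in\m$ and $A_T,C_T>0$ as in \eqref{bx2}. The natural quantity to control is the smooth nonnegative energy $E(t):=\tfrac12\,g_{\gamma(t)}(\dot\gamma(t),\dot\gamma(t))$. Using metric compatibility of the Levi--Civita connection, equation {\rm (E)}, and $g(H\dot\gamma,\dot\gamma)=0$ for the skew-adjoint part $H$ of $F$, one gets
\[
E'(t)\ =\ g\Bigl(\tfrac{D\dot\gamma}{dt}(t),\dot\gamma(t)\Bigr)\ =\ g(S\dot\gamma,\dot\gamma)+g(X,\dot\gamma)\ \le\ S_{\mathrm{sup}}(t)\,|\dot\gamma(t)|^2+|X|_{(\gamma(t),t)}\,|\dot\gamma(t)|.
\]
Setting $v(t):=\sqrt{1+|\dot\gamma(t)|^2}$ (smooth) and $\rho(t):=d(\gamma(t),p_0)$ (locally Lipschitz, with $|\rho'|\le|\dot\gamma|$ a.e.), dividing the previous inequality by $v$ and using $|\dot\gamma|\le v$, $|\dot\gamma|^2\le v^2$, $S_{\mathrm{sup}}(t)<N_T$ and \eqref{bx2} yields the coupled inequalities
\[
v'(t)\ \le\ N_T\,v(t)+A_T\,\rho(t)+C_T,\qquad\quad \rho'(t)\ \le\ v(t)\qquad\text{a.e.\ on }[0,b),
\]
hence $(v+\rho)'\le K\,(v+\rho)+C_T$ a.e.\ on $[0,b)$, with $K:=N_T+1+A_T$. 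As $b<+\infty$, Gronwall's inequality shows that $v+\rho$ is bounded on $[0,b)$; therefore $|\dot\gamma|\le v$ is bounded on $[0,b)$ and $\gamma([0,b))$ is contained in a fixed closed metric ball about $p_0$, which is compact since $(\m,g)$ is complete. This gives the relative compactness required above, and forward completeness follows; combined with the time-reversed statement, completeness follows.

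The last assertion is immediate from the first part: if $\m$ is compact, then $\m\times[-T,T]$ is compact for each $T>0$, so by continuity $\|S(t)\|$ and $|X|_{(p,t)}$ are bounded on $\m\times[-T,T]$; in particular $S$ is bounded along finite times and $X$ grows at most linearly (indeed is bounded) along finite times, so the first part applies and gives completeness of every inextensible solution, whatever $F$ and $X$ are.

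The step I expect to be the main obstacle is the \emph{a priori} estimate, precisely because of the coupling it involves: $X$ is controlled only through the distance $d(\gamma,p_0)$, which is in turn controlled by $\int|\dot\gamma|$, so $|\dot\gamma|$ cannot be bounded by a scalar Gronwall argument and one must close the estimate on the pair $(v,\rho)$ simultaneously — the fact that $X$ is allowed to grow only \emph{linearly} in the distance is exactly what keeps this possible. A minor technical point, handled by working with the smooth energy $E$ and a.e.\ differential inequalities, is that $t\mapsto|\dot\gamma(t)|$ and $t\mapsto d(\gamma(t),p_0)$ are only absolutely continuous.
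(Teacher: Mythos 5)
Your proof is correct and follows essentially the same strategy as the paper's: the same energy identity $\frac{d}{dt}g(\dot\gamma,\dot\gamma)=2g(S\dot\gamma,\dot\gamma)+2g(X,\dot\gamma)$, control of $d(\gamma(t),p_0)$ by the integrated speed, a Gronwall/comparison argument closing the coupled estimate, the extension lemma for integral curves combined with Hopf--Rinow, and the same time--reversal trick for the backward case. The only (harmless) differences are that you bypass the paper's reduction to the autonomous case on $\m\times\R$ by working directly on $\m\times[-T,T]$ with $T=b$, and that you close the estimate with a first--order Gronwall inequality for the pair $\bigl(\sqrt{1+|\dot\gamma|^2},\,d(\gamma,p_0)\bigr)$ instead of the paper's second--order subsolution comparison for $v(t)=\int_0^t|\dot\gamma|^2\,ds$.
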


\begin{remark}
(1) For the comparison with previous results in the literature,
recall that in Theorem \ref{A1}: (i) the problem is
time--dependent, (ii) $X$ is not necessarily the gradient of a
potential, and (iii) forces which depend linearly on the
velocities are allowed. Interpretations for frictional and
magnetic forces (Remark \ref{rdisipative}) or applications to
relativistic pp--waves (Example \ref{eppwave}), stress its proper
applicability. The optimality of the hypotheses in Theorem \ref{A1} is discussed
along the paper (see especially Example \ref{exx} and Remark \ref{renuevo}).

\smallskip
\noindent (2) The technique will suggest that a  superlinear
growth of the vector field $X$ may not destroy completeness: it is
only relevant the growth of the component of $X$ in the radial
component along the outside direction. Moreover, even  though the
forces above are always either independent or pointwise
proportional to the velocity, our techniques seem adaptable to
study also frameworks of higher order.  These considerations could
be used to give extensions of Theorem \ref{A1}, which might
constitute a further line of research.
\end{remark}

The remainder of the results are obtained in the relevant
case that $X$ comes from the gradient of a potential, so that
they can be compared easily with those in previous references,
where Lagrangian or Hamiltonian systems were considered.
\vspace{3mm}

Again, we need some notions to describe our next results.

Let $V:\m\times\R \rightarrow \R$ be a smooth time--dependent
potential, and emphasize as $\nabla^{\m}V$ the gradient of the
function $p \in \m \mapsto V(p,t)\in \R$, for each fixed $t\in
\R$. A function $U: M\times \R\rightarrow \R$ {\em grows at most
quadratically along finite times} if for each $T>0$ there exist $p_0\in\m$
and some constants $A_T, C_T>0$ such that
\begin{equation}\label{quadratic}
U(p,t)\ \leq\ A_T \ d^2(p,p_0) +  C_T \quad \text{for all} \;
(p,t)\in \m\times [-T,T]
\end{equation}
(again, this property is independent of the chosen $p_0$). Our main result
is then:

\begin{theorem}\label{A02}
Let $(\m,g)$ be a complete Riemannian manifold, $F$ a smooth
time--de\-pen\-dent $(1,1)$ tensor field on $\m$ with self--adjoint
component $S$ and $V:\m\times\R \to\R$ a smooth time--de\-pen\-dent
potential.

Assume that $S$ is bounded (resp. upper bounded; lower bounded)
along finite times and $-V$ grows at most  quadratically along
finite times.

If also $|\partial V/\partial t|  :M\times \R\rightarrow \R$ (resp.
$\partial V/\partial t$; $- \partial V/\partial t$) grows at most
quadratically along finite times, then each inextensible solution
of
\[
\hspace*{3cm}\frac{D\dot\gamma}{dt}(t)\ =\ F_{(\gamma(t),t)}\
\dot\gamma(t) - \nabla^M V (\gamma(t),t)\hspace*{2cm}
\mathrm{(E^*)}
\]
must be complete (resp. forward complete; backward
complete).
\end{theorem}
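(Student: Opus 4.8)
The plan is to follow the same scheme as for Theorem~\ref{A1}: an inextensible solution $\gamma$ of (E$^*$) is, through its velocity, an integral curve of the second order vector field $G$ on $TM\times\R$, and such a curve cannot have a finite endpoint while staying in a compact subset of $TM\times\R$. So it suffices to show that, whenever $b<+\infty$ is the right endpoint of the maximal interval and $[t_0,b)$ is contained in some $[-T,T]$, both $\rho(t):=d(\gamma(t),p_0)$ and $|\dot\gamma(t)|$ stay bounded on $[t_0,b)$ (then $\gamma([t_0,b))$ lies in a compact metric ball since $(M,g)$ is complete, and $\dot\gamma([t_0,b))$ in a compact subset of $TM$, a contradiction). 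Unlike in Theorem~\ref{A1}, here $X=-\nabla^MV$ need \emph{not} grow linearly, and the way around this is the \emph{energy}
\[
E(t)\ :=\ \tfrac12\,g(\dot\gamma(t),\dot\gamma(t))+V(\gamma(t),t).
\]
Differentiating $E$ along (E$^*$), and using that the skew--adjoint part $H$ of $F$ does no work, $g(H\dot\gamma,\dot\gamma)=0$, the $M$--gradient contributions cancel and one is left with the identity
\[
E'(t)\ =\ g\big(S_{(\gamma(t),t)}\dot\gamma(t),\dot\gamma(t)\big)+\frac{\partial V}{\partial t}(\gamma(t),t),
\]
in which no derivative of $V$ in the $M$--directions appears.

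Next I would set up the a priori estimate. Fix $T$ with $[t_0,b)\subset[-T,T]$ and, enlarging if necessary, a single constant $k\ge1$ with $\|S(t)\|<k$ (resp.\ $S_{\mathrm{sup}}(t)<k$; $S_{\mathrm{inf}}(t)>-k$) and $-V(p,t)\le k\,d(p,p_0)^2+k$ and $|\partial V/\partial t|(p,t)\le k\,d(p,p_0)^2+k$ (resp.\ $\partial V/\partial t\le\cdots$; $-\partial V/\partial t\le\cdots$) for all $(p,t)\in M\times[-T,T]$. Since $\tfrac12|\dot\gamma(t)|^2=E(t)-V(\gamma(t),t)\ge0$, the function $\Psi(t):=E(t)+k\,\rho(t)^2+k$ satisfies $\Psi\ge0$ and $|\dot\gamma|^2\le2\Psi$; also $t\mapsto\rho(t)$ is locally Lipschitz with $|\rho'|\le|\dot\gamma|$ a.e. Feeding $|g(S\dot\gamma,\dot\gamma)|\le k\,|\dot\gamma|^2\le2k\Psi$, $|\partial V/\partial t|\le k\rho^2+k$, and $2k\rho|\rho'|\le2k\rho|\dot\gamma|\le2k\rho\sqrt{2\Psi}\le2k(\rho^2+\Psi)$ into the energy identity and into $(\rho^2)'=2\rho\rho'$, one obtains, a.e.\ on $[t_0,b)$,
\[
\Psi'(t)\ \le\ C\big(\Psi(t)+\rho(t)^2+1\big),\qquad
\big(\rho(t)^2\big)'\ \le\ C\big(\Psi(t)+\rho(t)^2+1\big)
\]
for a constant $C=C(T)$; only upper bounds on these derivatives are used, which is exactly why the one--sided hypotheses give the one--sided conclusions.

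Adding the two inequalities, $\varphi:=\Psi+\rho^2+1\ge1$ is absolutely continuous and satisfies $\varphi'\le2C\varphi$ a.e., so Gronwall gives $\varphi(t)\le\varphi(t_0)e^{2C(T-t_0)}$ for all $t\in[t_0,b)$; hence $\rho$ and $|\dot\gamma|^2\le2\varphi$ are bounded on $[t_0,b)$, which closes the forward case as explained above. The backward case follows by applying the forward result to the time--reversed curve $s\mapsto\gamma(-s)$, which solves (E$^*$) for the data $\big(-F_{(\cdot,-s)},\,V(\cdot,-s)\big)$: its self--adjoint part is $-S_{(\cdot,-s)}$ and the time derivative of its potential is $-(\partial V/\partial t)(\cdot,-s)$, so ``$S$ lower bounded'' and ``$-\partial V/\partial t$ grows at most quadratically'' become precisely the hypotheses of the forward case; the two--sided statement is the conjunction of both.

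The step I expect to be the real obstacle is closing the loop between energy and position. The energy identity controls $E'$ only through $|\dot\gamma|^2$ and $\partial V/\partial t$, and both of these — via $|\dot\gamma|^2=2(E-V)$ and the quadratic growth of $-V$ and of $\partial V/\partial t$ — reintroduce a term quadratic in $\rho=d(\gamma,p_0)$, while $\rho$ itself increases at rate $\lesssim|\dot\gamma|\lesssim\sqrt{\Psi}$. Estimating energy and position separately therefore fails; one must work with the combined quantity $\Psi+\rho^2$ (energy plus squared distance), for which the pair of differential inequalities is \emph{affine} in $(\Psi,\rho^2)$ and hence free of finite--time blow--up. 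This is also where the word ``quadratic'' is sharp: a super--quadratic bound on $-V$ would produce a genuinely superlinear system, whose solutions may escape to infinity in finite time. A minor technical wrinkle is that $t\mapsto d(\gamma(t),p_0)$ is only locally Lipschitz, so Gronwall must be invoked in its absolutely continuous / a.e.-derivative form.
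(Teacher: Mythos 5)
Your proof is correct, and it rests on the same key observation as the paper's: differentiating the energy $\tfrac12 g(\dot\gamma,\dot\gamma)+V(\gamma(t),t)$ along $\mathrm{(E^*)}$ leaves only $g(S\dot\gamma,\dot\gamma)$ and $\partial V/\partial t$, both controlled one--sidedly by $|\dot\gamma|^2$ and by quadratic functions of the distance, with the sign structure of these bounds producing the forward/backward dichotomy via time reversal. Where you diverge is in how the resulting differential inequalities are closed. The paper integrates the energy inequality once, uses the subsolution Lemma \ref{subsol} to get $u=|\dot\gamma|^2\le k_\gamma\left(\alpha-\V(l_\gamma(t))\right)$ with $\V(s)=-As^2-C$, and then applies the Comparison Lemma \ref{comparison} to the $C^1$ arc--length function $l_\gamma$ --- i.e.\ it folds Theorem \ref{A02} into the scheme of Theorem \ref{A01} by absorbing $\partial V/\partial t$ into the constants and invoking the positive completeness of $-As^2-C$. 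You instead form the single quantity $\Psi+\rho^2+1$ (shifted energy plus squared distance), note that the quadratic hypotheses make the system affine in $(\Psi,\rho^2)$, and finish with one Gronwall estimate. Your route is more self--contained and elementary for this particular statement; the paper's route is the one that generalizes (in the autonomous setting of Theorem \ref{A01}) to potentials decaying slightly faster than quadratically, such as $-s^2(\log s)^2$, where a Gronwall inequality linear in $\Psi+\rho^2$ is no longer available. Two technical points, both of which you handle correctly: $\rho(t)=d(\gamma(t),p_0)$ is only locally Lipschitz, so Gronwall must be used in its absolutely continuous, a.e.--derivative form (the paper sidesteps this by working with the smooth majorant $l_\gamma(t)\ge d(\gamma(t),p_0)$); and the final extension step is exactly Lemma \ref{extend} together with the compactness of closed bounded sets in the complete manifold $\m$.
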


\begin{remark} (1)  From Theorem \ref{A02}
one can reobtain the conclusion (stated with more generality in Theorem \ref{A1})
that the completeness of $\mathrm{(E^*)}$ holds whenever $\m$ is compact.

\smallskip \noindent (2) The proof of Theorem \ref{A02} allows us to sharpen its
conclusions (see Theorem \ref{A01}).  When particularized to
autonomous systems, Theorem \ref{A02} (and, in particular,
Corollary \ref{c02}) extends the results by Weinstein and Marsden
in \cite{WM} and in \cite[Theorem 3.7.15]{AM} (see also our
discussion in Remark \ref{renuevo}). Furthermore, in the
non--autonomous case, it generalizes widely the results by
Gordon\footnote{\label{foot} In relation with \cite{Go} recall:
(1) By simplicity, here we do not consider the case when $M$ is
not complete (except in the discussion below Proposition
\ref{G01}), as the alternative hypotheses on $V$ in \cite{Go} (to
be bounded from below and a proper map) become quite restrictive
under our approach. However, one could combine such a type of
hypotheses on, for example, an incomplete end of the Riemannian
manifold $M$, with a general behavior of $V$ (as in Theorem
\ref{A02}) on the complete part (see also \cite[Prop. 4.1.21]{AMR}
for the hypotheses of properness in infinite dimension). (2) The
bounds on the growth of $\nabla V$ obtained by using an isometric
embedding in an Euclidean space $\phi: M\rightarrow \R^N$ and its
Euclidean norm $\| \cdot\|$ are less sharp than those obtained by
using the intrinsic distance $d$, as $\| \phi(p)-\phi(q)\| \leq
d(p,q)$ for $p,q\in M$. (3) In relation with the existence of
proper functions and embeddings (see \cite{Go}, \cite{Go73}), from the
technical viewpoint it is also worth pointing out
 that the existence of one embedding
$\phi$ as above with a {\em closed} image, is characterized
nowadays by the completeness of $g$ (see \cite{Mu} and compare
with \cite{Go73}). } in \cite{Go}.

\smallskip

\noindent (3) The estimate of the decreasing of $V$  agrees with
Theorem \ref{A1}, in the sense that the norm of the gradient of
the function $-A_T d^2(p,p_0) -C_T$ (say, in the open dense subset
of $M$ where it is smooth) grows  linearly. However, Theorem
\ref{A02} is not a consequence of Theorem \ref{A1}, as $\nabla^M
V$ may grow superlinearly even when $V$ is bounded.

\smallskip

\noindent (4) The optimality of the growth of $-V$ (as the
optimality of the growth of $X$ and $F$ in Theorem \ref{A1}) is
checked by simple 1--dimensional examples. The bound for $\partial
V/\partial t$ is also very general, and a relevant application of
this case for pp--waves is also developed at the end of this paper.
However, it is not so clear if our bound for the growth of
$\partial V/\partial t$ can be improved. Starting at Remark
\ref{rgordon}, a discussion is carried out, including the
introduction of a different type of bound (see Proposition
\ref{G01}). This question (which has been omitted in the
literature, as far as we know) may deserve to be studied
specifically further.
\end{remark}

\section{The autonomous case}\label{s2}

Along this section,  $X$ and $F$ are regarded as tensor fields
just on the (connected) manifold $\m$, so that equation
(E) simplifies into $\mathrm{(E_0)}$. An argument which extends
the construction of the geodesic flow in Riemannian geometry,
easily shows that there exists a vector field $G^0$ on the tangent
bundle $T\m$ such that its integral curves are precisely the
velocities $s \mapsto \dot\gamma(s)$ of the curves $\gamma$ which
solve equation $\mathrm{(E_0)}$ (see the detailed discussion for
the non--autonomous case in the next section). Recall that an
integral curve $\rho$ of a vector field defined on some bounded
interval $[a,b)$, $b<+\infty$, can be extended to $b$ (as an
integral curve) if and only if there exists a sequence
$\{t_n\}_n$, $t_n\nearrow b$, such that $\{\rho(t_n)\}_n$
converges (see \cite[Lemma 1.56]{ON}). The following technical
result follows directly from this fact.

\begin{lemma}\label{extend}
Let $\gamma: [0,b) \to \m$ be a solution of equation
$\mathrm{(E_0)}$ with $0<b<+\infty$. The curve $\gamma$ can be
extended to $b$ as a solution of $\mathrm{(E_0)}$ if and only if
there exists a sequence $\{t_n\}_n \subset [0,b)$ such that $t_n
\to b^-$ and the sequence of velocities $\{\dot\gamma(t_n)\}_n$ is
convergent in $T\m$.
\end{lemma}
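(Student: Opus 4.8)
The plan is to reduce the statement to the basic extendability criterion for integral curves of vector fields, recalled just above from O'Neill. First I would note that, as asserted in the text, the velocities $s\mapsto\dot\gamma(s)$ of solutions of $\mathrm{(E_0)}$ are exactly the integral curves of a smooth vector field $G^0$ on $T\m$; this identification is the only structural fact needed. Consequently, if $\gamma:[0,b)\to\m$ solves $\mathrm{(E_0)}$ with $b<+\infty$, then $\rho:=\dot\gamma:[0,b)\to T\m$ is an integral curve of $G^0$ defined on a bounded interval, and the cited \cite[Lemma 1.56]{ON} applies verbatim: $\rho$ extends to $b$ as an integral curve of $G^0$ if and only if there is a sequence $t_n\nearrow b$ with $\{\rho(t_n)\}_n$ convergent in $T\m$.

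The remaining point is to translate an extension of $\rho$ into an extension of $\gamma$ and conversely. For the forward direction, if $\gamma$ extends to a solution $\bar\gamma$ of $\mathrm{(E_0)}$ on $[0,b]$ (indeed on a slightly larger interval), then $\dot{\bar\gamma}$ is continuous at $b$, so taking any sequence $t_n\to b^-$ gives $\dot\gamma(t_n)=\dot{\bar\gamma}(t_n)\to\dot{\bar\gamma}(b)$, which is the desired convergent sequence of velocities. For the converse, suppose $\{\dot\gamma(t_n)\}_n$ converges in $T\m$; then $\rho=\dot\gamma$ is an integral curve of $G^0$ admitting a convergent subsequence along $t_n\nearrow b$, so by \cite[Lemma 1.56]{ON} it extends to an integral curve $\bar\rho$ of $G^0$ on $[0,b]$. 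One then checks that the base curve $t\mapsto\pi_{T\m}(\bar\rho(t))$ (where $\pi_{T\m}:T\m\to\m$ is the bundle projection) is a $C^1$ extension of $\gamma$ whose velocity is $\bar\rho$; since $\bar\rho$ is an integral curve of $G^0$ and $G^0$ was built precisely so that its integral curves project to solutions of $\mathrm{(E_0)}$, this extension solves $\mathrm{(E_0)}$. Smoothness of the extended curve then follows from the smoothness of $G^0$ and the equation itself.

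I expect the only genuine subtlety — and it is a mild one — to be the bookkeeping that an integral curve of $G^0$ really does have the form $\dot\gamma$ for a solution $\gamma$ of $\mathrm{(E_0)}$, i.e. that $G^0$ is a genuine second-order vector field (its projection to $\m$ recovers the foot point and $d\pi_{T\m}\circ G^0=\mathrm{id}_{T\m}$ under the canonical identifications). This is exactly the content of the "argument which extends the construction of the geodesic flow" alluded to in the text and spelled out for the non-autonomous case in Section \ref{s3}; granting it, both implications are immediate and the proof is essentially just an application of \cite[Lemma 1.56]{ON} together with continuity of $\dot\gamma$ at an endpoint where $\gamma$ extends smoothly. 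No estimates or growth hypotheses enter here; this lemma is purely the qualitative extendability dictionary that the later completeness theorems will invoke after supplying quantitative control on $\dot\gamma(t_n)$.
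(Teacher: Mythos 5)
Your proposal is correct and follows exactly the route the paper takes: the paper's entire proof consists of observing that the velocities of solutions of $\mathrm{(E_0)}$ are the integral curves of the second--order vector field $G^0$ on $T\m$ and then invoking \cite[Lemma 1.56]{ON}, which is precisely your argument. The extra bookkeeping you supply (projecting the extended integral curve back to $\m$ and checking it is a solution) is a correct and slightly more explicit rendering of what the paper leaves implicit.
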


\begin{remark}
Here, we deal with finite--dimensional Riemannian manifolds. So,
in order to apply Lemma \ref{extend} to the completeness of
trajectories, the local compactness of $M$, and then of $T\m$,
will be used. On the contrary, to extend the results in the
present article to (infinite--dimensional) Hilbert manifolds, the
standard tools require to induce a (complete) Riemannian metric on
$TM$ (see \cite{Eb}, \cite[Supplement 9.1.C]{AMR}).
\end{remark}

Furthermore, as in what follows we are going to use more than once
a classical subsolution argument,  we recall it here for
completeness (e.g., see \cite[Lemma 1.1]{T}).

\begin{lemma}\label{subsol}
Taking $f \in C^1(\R^2,\R)$, 
let
$w=w(t)$ be a subsolution of the differential equation
\begin{equation}\label{sub1}
\dot u\ =\ f(t,u) \quad  \hbox{on} \quad  \hbox{$[t_0, T)$,}
\end{equation}
i.e., $\dot w < f(t,w)$ on $[t_0, T)$. Then for every solution
$u=u(t)$ of \eqref{sub1} such that $w(t_0) \le u(t_0)$ we have
\[
w(t) < u(t) \quad \hbox{for all} \quad  \hbox{$t \in (t_0, T)$.}
\]
\end{lemma}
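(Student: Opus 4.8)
The plan is to run the standard continuity argument on the difference $z := u - w$, the crucial point being that at any instant where $z$ vanishes the \emph{strict} subsolution inequality forces $\dot z > 0$.

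First I would record the behaviour near $t_0$. Since $u$ solves \eqref{sub1} it is $C^1$, and $w$ is differentiable by hypothesis, so $z = u - w$ is differentiable on $[t_0,T)$ with $z(t_0) \ge 0$. I claim there is $t_1 \in (t_0,T)$ with $z > 0$ on $(t_0,t_1]$. Indeed, if $z(t_0) > 0$ this is immediate from continuity; if $z(t_0) = 0$, i.e.\ $u(t_0) = w(t_0)$, then
\[
\dot z(t_0)\ =\ f(t_0,u(t_0)) - \dot w(t_0)\ =\ f(t_0,w(t_0)) - \dot w(t_0)\ >\ 0,
\]
so $z(t) > z(t_0) = 0$ for every $t$ in a suitable right neighbourhood of $t_0$.

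Next I would argue by contradiction. Suppose the set $\{t \in (t_0,T) : z(t) \le 0\}$ is nonempty and let $t^*$ be its infimum. By the previous step $t^* > t_0$, and by continuity $z(t^*) = 0$ while $z > 0$ on $(t_0,t^*)$. Since $z(t^*) = 0$ gives $u(t^*) = w(t^*)$, exactly as above
\[
\dot z(t^*)\ =\ f(t^*,w(t^*)) - \dot w(t^*)\ >\ 0.
\]
On the other hand, from $z > 0$ on $(t_0,t^*)$ and $z(t^*) = 0$ we get, for $t < t^*$,
\[
\frac{z(t^*)-z(t)}{t^*-t}\ =\ \frac{-z(t)}{t^*-t}\ <\ 0,
\]
and letting $t \to t^{*-}$ yields $\dot z(t^*) \le 0$, a contradiction. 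Hence $z > 0$ on all of $(t_0,T)$, which is the claim.

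The argument uses only that $f$ is continuous (so that solutions of \eqref{sub1} are $C^1$) together with the strictness of $\dot w < f(t,w)$; in particular no uniqueness theorem for \eqref{sub1} is invoked, so the assumed $C^1$ regularity of $f$ is more than sufficient. The single delicate point — and the only place where the strict inequality is genuinely used — is the analysis at instants where $z$ vanishes, both at $t_0$ (where $u(t_0) = w(t_0)$ is permitted) and at the candidate first contact point $t^*$: there the sign of $\dot z$ has to be read off from the differential inequality, not from the values of $z$. With only the non-strict inequality $\dot w \le f(t,w)$ the conclusion would degrade to $w \le u$, which is precisely why the lemma is stated with a strict sign.
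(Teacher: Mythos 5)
Your proof is correct. The paper itself states this lemma without proof, merely citing \cite[Lemma 1.1]{T}, and your first--contact argument on $z = u - w$ is exactly the standard proof of that reference: you correctly handle the two delicate points, namely the case $u(t_0)=w(t_0)$ (where $\dot z(t_0)>0$ comes from the strict subsolution inequality) and the sign contradiction for $\dot z$ at the first touching time $t^*$, which is where the strictness is genuinely needed.
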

\subsection{General result}
\label{dueuno}

With the notation introduced in the previous section, we
have
\begin{equation}\label{adj}
g(v,Fv) = g(v,Sv)\qquad \hbox{for all \, $v\in T\m .$}
\end{equation}
Now we state an accurate result which gives sufficient conditions
for the forward/back\-ward completeness of each inextensible
solution of equation $\mathrm{(E_0)}$.

\begin{theorem}\label{A}
Let $(\m,g)$ be a complete Riemannian manifold, $X$ a smooth
time--inde\-pen\-dent vector field and $S$ the self--adjoint component
of a smooth time--inde\-pen\-dent $(1,1)$ tensor field $F$ on $\m$.

 If $\gamma: I \to \m$ is an inextensible solution of
$\mathrm{(E_0)}$ satisfying the following assumptions:
\begin{itemize}
\item[$(a_1)$] There exists a constant $c_\gamma > 0$ such that
\[
g(\dot\gamma(t), S \dot\gamma(t)) \ \le\ c_\gamma\
g(\dot\gamma(t),\dot\gamma(t))\quad \hbox{for all \, $t\in I$,}
\]
\item[$(a_2)$] The pointwise norm $|X|$ of $X$ is at most linear
on $|\gamma|$, i.e., there exists $r_\gamma > 0$ such that
\[
|X|_{\gamma(t)} \ \le\
r_\gamma\, (1+ |\gamma(t)|\,) \quad \hbox{for all \, $t\in I$,}
\]
where $|\gamma(t)|:=d(\gamma(t),p_0)$ is the Riemannian distance
to some fixed point $p_0 \in \m$,
\end{itemize}
then, $\gamma$ must be forward complete.
\vspace{2mm}

Analogously, if the inextensible solution $\gamma$ satisfies
$(a_2)$ while condition $(a_1)$ is replaced by the assumption
\begin{itemize}
\item[$(a_1')$] there exists a constant $c_\gamma > 0$ such that
\[
-g(\dot\gamma(t),S \dot\gamma(t)) \ \le\ c_\gamma\,
g(\dot\gamma(t),\dot\gamma(t)) \quad \hbox{for all \, $t\in I$,}
\]
\end{itemize}
then $\gamma$ must be backward complete.
\end{theorem}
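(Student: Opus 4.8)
The plan is to argue by contradiction: assuming that an inextensible solution $\gamma:I\to\m$ satisfying $(a_1)$, $(a_2)$ had $\sup I=b<+\infty$, I would show that $\gamma$ extends as a solution beyond $b$, against inextensibility. By Lemma \ref{extend} it is enough to exhibit $t_n\nearrow b$ with $\{\dot\gamma(t_n)\}$ convergent in $T\m$, and since $(\m,g)$ is complete, Hopf--Rinow reduces this to two boundedness facts on a final interval $[t_0,b)$, $t_0\in I$: that the speed $|\dot\gamma(t)|$ stays bounded, and that $\rho(t):=d(\gamma(t),p_0)=|\gamma(t)|$ stays bounded. Indeed, these two together confine $\{(\gamma(t_n),\dot\gamma(t_n))\}$ to a compact subset of $T\m$.

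The core is an energy estimate. Set $u(t):=\sqrt{1+g(\dot\gamma(t),\dot\gamma(t))}\ge 1$, a smooth positive function along $\gamma$. From $\mathrm{(E_0)}$, metric compatibility, and \eqref{adj} one gets $u\dot u=g(\dot\gamma,S\dot\gamma)+g(X_{\gamma},\dot\gamma)$; then $(a_1)$ bounds $g(\dot\gamma,S\dot\gamma)\le c_\gamma g(\dot\gamma,\dot\gamma)\le c_\gamma u^2$, and Cauchy--Schwarz together with $(a_2)$ bounds $g(X_{\gamma},\dot\gamma)\le|X|_{\gamma}|\dot\gamma|\le r_\gamma(1+\rho)\,u$, whence
\[
\dot u\ \le\ c_\gamma\,u + r_\gamma(1+\rho)\qquad\text{on }[t_0,b).
\]
Since $s\mapsto d(\gamma(s),p_0)$ is non--expansive along $\gamma$, one has $\rho(t)\le\rho(t_0)+\int_{t_0}^t|\dot\gamma|\le\rho(t_0)+\int_{t_0}^t u=:\tilde\rho(t)$, where $\tilde\rho$ is $C^1$ with $\dot{\tilde\rho}=u$. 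Replacing $\rho$ by $\tilde\rho$ and putting $z:=u+\tilde\rho$ (a $C^1$ function) decouples the estimates into the single scalar linear inequality $\dot z\le Kz+r_\gamma$ with $K:=c_\gamma+1+r_\gamma$. Comparing $z$, via Lemma \ref{subsol}, with the solution of $\dot y=Ky+r_\gamma+\eps$, $y(t_0)=z(t_0)$, and letting $\eps\to 0^+$, I obtain that $z$ --- hence $u$ and $\tilde\rho$, hence $|\dot\gamma|$ and $\rho$ --- is bounded on the finite interval $[t_0,b)$. This furnishes the desired relatively compact set of velocities, and with Lemma \ref{extend} the contradiction; so $\sup I=+\infty$, i.e.\ $\gamma$ is forward complete.

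For the backward statement, instead of redoing the estimate I would use time reversal: $\tilde\gamma(s):=\gamma(-s)$ solves $\mathrm{(E_0)}$ with $F$ (so $S$) replaced by $-F$ (so $-S$); condition $(a_1')$ for $\gamma$ becomes exactly $(a_1)$ for $\tilde\gamma$ relative to $-S$, while $(a_2)$ is manifestly preserved. The already proved forward part applied to $\tilde\gamma$ gives that $\tilde\gamma$ is forward complete, i.e.\ $\inf I=-\infty$, which is the backward completeness of $\gamma$.

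The main obstacle is precisely the coupling just described: the energy balance controls $\dot u$ not by $u$ alone but also by the a priori unbounded distance $\rho$, whereas $\rho$ is only controlled by the integral of the speed. Neither a Gronwall estimate on the energy nor one on the distance closes by itself; the trick is to combine them into $z=u+\tilde\rho$, after smoothing the merely Lipschitz $\rho$ into the $C^1$ majorant $\tilde\rho$ so that the subsolution comparison of Lemma \ref{subsol} applies without fuss. A minor additional care is the passage $\eps\to 0$ to convert the non--strict inequality $\dot z\le Kz+r_\gamma$ into the strict-subsolution hypothesis of that lemma.
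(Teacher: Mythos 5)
Your proof is correct and follows essentially the same route as the paper's: argue by contradiction, derive an energy estimate from $(a_1)$--$(a_2)$, close it with the subsolution comparison of Lemma \ref{subsol}, invoke Lemma \ref{extend} together with completeness of $(\m,g)$, and handle the backward case by time reversal ($F\mapsto -F$). The only difference is cosmetic: you compare at the level of the regularized speed $\sqrt{1+g(\dot\gamma,\dot\gamma)}$ plus its arc--length majorant, which yields a single first--order linear inequality, whereas the paper squares the speed, sets $v(t)=\int_0^t u(s)\,ds$, and applies Lemma \ref{subsol} twice to a second--order linear inequality --- both close the same Gronwall loop.
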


\begin{proof} Without loss of generality,
let $I= [0,b)$, $0<b<+\infty$, be the domain of a
for\-ward--inexten\-si\-ble solution $\gamma$ of $\mathrm{(E_0)}$, and
put $p_0=\gamma(0)$, so that $|\gamma(t)|=d(\gamma(t),\gamma(0))$.
Writing \begin{equation}\label{u} u(t) :=
g(\dot\gamma(t),\dot\gamma(t)),\end{equation} it is enough to
prove that a constant $k > 0$ exists such that
\begin{equation}\label{inequality3}
u(t)\ \le\ k \quad\hbox{for all \, $t \in [0,b)$.}
\end{equation}
In fact, \eqref{inequality3} implies that $\dot\gamma(I)$  is
bounded in $TM$ and, being $(\m,g)$ complete, Lemma
\ref{extend} is applicable because of the compactness of the
bounded metric balls in $\m$. Hence, $\gamma$ can be extended to
$b$ in contradiction with its maximality assumption.

With the aim to prove \eqref{inequality3}, for any $t \in [0,b)$ equation
$\mathrm{(E_0)}$ implies
\[\begin{split}
\dot u(t)\ &=\ 2 g(\dot\gamma(t),F \dot\gamma(t)) +
2 g(\dot\gamma(t),X_{\gamma(t)}) \\
 &=\ 2 g(\dot\gamma(t),S \dot\gamma(t)) +
2 g(\dot\gamma(t),X_{\gamma(t)}), \end{split}
\]
and therefore, assumptions $(a_1)$ and $(a_2)$ give
\begin{eqnarray}\nonumber
\dot u(t)\ &\le&\ 2 c_\gamma g(\dot\gamma(t),\dot\gamma(t)) + 2
r_\gamma (1+|\gamma(t)|)
\sqrt{g(\dot\gamma(t),\dot\gamma(t))}\\
&\le&\ 2 c_\gamma g(\dot\gamma(t),\dot\gamma(t)) + r_\gamma^2
(1+|\gamma(t)|)^2+ g(\dot\gamma(t),\dot\gamma(t))\nonumber \\
& \le& \ (2 c_\gamma + 1) u(t) + 2 r_\gamma^2 + 2
r_\gamma^2|\gamma(t)|^2. \label{esplit}
\end{eqnarray}
 Thus, taking into account that
\[
|\gamma(t)|^2\ \leq\ \left(\int_0^t \sqrt{u(s)} ds\right)^2\ \leq\
b \int_0^t u(s) ds,
\]
and putting $v(t)= \displaystyle\int_0^t u(s) ds$, inequality
\eqref{esplit} yields
\begin{equation}\label{eineq}
\ddot v < k_1\, \dot v + k_2\, v + k_3
\end{equation}
for some constants $k_1, k_2, k_3 > 0$. Recall that any solution
$v_0$ of the linear ordinary differential equation obtained by
replacing the inequality in \eqref{eineq} by equality, is a $C^2$
function defined on all $\R$. Now, choosing $v_0$ such that
$v_0(0)= v(0) =0$, $\dot v_0(0)=u(0)$, applying twice Lemma
\ref{subsol} we have $v<v_0$, $\dot v <\dot v_0$ on all $(0,b)$
and, so, \eqref{inequality3} holds with $k= \max \{\dot v_0(t):
t\in [0,b]\}$.

Vice versa, let $\gamma : (-b,0] \to \m$ be a
backward--inextensible solution of $\mathrm{(E_0)}$. Then, the
reparametrization $\gamma^* : t\in[0,b) \mapsto \gamma^*(t) =
\gamma(-t) \in \m$ is a forward--inextensible solution of
\[
\frac{D\dot\gamma^*}{dt}(t)\ =\ (-F)_{\gamma^*(t)} \dot\gamma^*(t)
+ X_{\gamma^*(t)} \qquad\hbox{in \, $[0,b)$.}
\]
From $(a_1')$ it follows that $-F$ satisfies $(a_1)$ along
$\gamma^*$; whence $\gamma^*$ must be forward complete, that is,
$\gamma$ is backward complete.
\end{proof}

\begin{remark} \label{rdisipative} (1) Assumption $(a_1)$ in
the previous result means that $\lambda^+(t)$, the big\-gest eigen\-value
 of the operator $S$  along $\gamma$, is upper
bounded for $t\in I$. Notice that
 the speed of $\gamma$ increases maximally  when
each $\dot\gamma(t)$ lies in the $\lambda^+(t)$--eigen\-space. The
upper boundedness of $\lambda^+(t)$ implies that, even though its
speed may increase linearly, the curve $\gamma$ cannot cover an
infinite length in a finite time and, so, the trajectory is
forward complete. Recall that only upper boundedness is relevant
here. In fact, frictional forces are proportional to the velocity
(at least as a first approximation) and opposed to it (i.e., with
negative eigenvalues). So, they make the speed  to decrease and,
thus, the obtained results agree with the expectation that the
trajectory is defined for arbitrarily big times.

\smallskip

\noindent (2) Theorem \ref{A} also implies that, on any complete
Riemannian manifold $(\m,g)$, each inextensible solution of
equation $\mathrm{(E_0)}$ must be complete if $F$ is assumed to be
skew--adjoint and $|X|$ is bounded. Such a result applies to
magnetic fields both, in a non--relativistic setting and in the
relativistic one (when a suitable static vector field exists). In
fact,  it extends widely \cite[Corollary 2.4]{BCFR}, which was
stated for pure magnetics fields (i.e., $F$ is skew--adjoint and
$X\equiv 0$) on $(\m,g)$. Recall that any magnetic trajectory
$\gamma$ satisfies the conservation law $g(\dot \gamma (t), \dot
\gamma (t)) =\ $constant (depending on $\gamma$). This  crucial
property is used to get \cite[Corollary 2.4]{BCFR}, but it does
not hold for electric forces or other forces allowed by
$\mathrm{(E_0)}$.
\end{remark}

A direct consequence of Theorem \ref{A} is the following
result.

\begin{corollary}
Let $(\m,g)$ be a complete Riemannian manifold, $X$ a smooth
time--in\-de\-pen\-dent vector field and $S$ the self--adjoint component
of a smooth time--in\-de\-pen\-dent $(1,1)$ tensor field $F$ on $\m$.
If $X$ grows at most
linearly in $\m$ and $\|S\|$ is bounded, then all the inextensible solutions of ${\rm (E_0)}$ are
complete. In particular, this result holds if $\m$ is compact.
\end{corollary}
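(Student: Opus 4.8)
The plan is to deduce the corollary directly from Theorem \ref{A}, by checking that the two global hypotheses (``$X$ grows at most linearly in $\m$'' and ``$\|S\|$ bounded'') force, for \emph{every} inextensible solution $\gamma$, the pointwise hypotheses $(a_1)$, $(a_1')$ and $(a_2)$ of that theorem. Since those pointwise hypotheses, once satisfied, yield simultaneously forward and backward completeness, the conclusion ``all inextensible solutions are complete'' will follow at once.

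First I would fix an arbitrary inextensible solution $\gamma: I\to\m$ of $\mathrm{(E_0)}$ and a reference point $p_0\in\m$. From \eqref{bx}, applied to $p=\gamma(t)$, we get $|X|_{\gamma(t)}\le A\, d(\gamma(t),p_0)+C = A\,|\gamma(t)| + C$; setting $r_\gamma:=\max\{A,C\}$ (any constant dominating both $A$ and $C$) gives $|X|_{\gamma(t)}\le r_\gamma(1+|\gamma(t)|)$ for all $t\in I$, which is precisely $(a_2)$. Next, the boundedness of $\|S\|$ means there is a constant $N>0$ with $\|S_p\|<N$ for all $p\in\m$, hence $S_{\mathrm{sup}}<N$ and $S_{\mathrm{inf}}>-N$. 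Using \eqref{adj}-style reasoning (or directly the definition of $S_{\mathrm{sup}}$, $S_{\mathrm{inf}}$ together with homogeneity of $g(v,Sv)$ in $v$), one has $-N\,g(v,v)\le g(v,Sv)\le N\,g(v,v)$ for every $v\in T\m$; evaluating at $v=\dot\gamma(t)$ gives $(a_1)$ and $(a_1')$ simultaneously with $c_\gamma:=N$. Therefore Theorem \ref{A} applies and $\gamma$ is both forward and backward complete, i.e.\ complete; as $\gamma$ was arbitrary, all inextensible solutions of $\mathrm{(E_0)}$ are complete.

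For the final sentence, I would note that if $\m$ is compact then $d(\cdot,p_0)$ is bounded on $\m$ (so $X$ trivially grows at most linearly, being itself bounded), and $\|S\|$, being a continuous function on the compact space of unit tangent vectors, is bounded as well; hence the hypotheses of the corollary are automatically met and completeness follows.

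The steps here are essentially bookkeeping: the only mild care needed is in passing from the ``sup/inf over unit vectors'' formulation of $\|S\|$ to the two-sided pointwise bound $-c_\gamma\, g(v,v)\le g(v,Sv)\le c_\gamma\, g(v,v)$ valid for \emph{all} $v$, which uses that $v\mapsto g(v,Sv)$ is homogeneous of degree two; I expect no genuine obstacle, since all the analytic content is already carried by Theorem \ref{A}.
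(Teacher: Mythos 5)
Your proposal is correct and is exactly the argument the paper intends: the corollary is stated there as a ``direct consequence'' of Theorem \ref{A}, and your verification that the global hypotheses (linear growth of $|X|$, finiteness of $\|S\|$) imply $(a_1)$, $(a_1')$ and $(a_2)$ along every inextensible solution, plus the compactness remark at the end, is precisely the intended bookkeeping. The homogeneity point you flag is handled correctly (and the case $\dot\gamma(t)=0$ is trivial), so there is nothing to add.
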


\begin{example}\label{exx} {\rm The optimal character of the bounds
in Theorem \ref{A} can be checked just taking $\m = \R$, $g
=dx^2$.
\\[0.5mm]
(1) {\em Optimality of the bound for $|X|$.} Put $F\equiv 0$ and
$X(x) = \mu_\eps (x) \frac{d}{dx}$ where $\mu_\eps(x)= (1+\eps)\,
x^{1+2\eps}$ for all $x\ge 1$ and some prescribed $\eps >0$. Thus,
in the region where $x(t)\geq 1$ equation $\mathrm{(E_0)}$ reduces
to
\begin{equation}\label{Ex1}
\ddot{x}(t) \ =\ (1+\eps) x^{1+2\eps}(t).
\end{equation}
Multiplying by $\dot x$, integrating with respect to $t$ and
considering the initial data $x(0)=1, \dot x(0)=0$, the
solution of equation \eqref{Ex1} satisfies
\[
\dot{x}^2(t) \ =\ x^{2(1+\eps)}(t) - 1.
\]
The solution of this Cauchy problem  is the inverse of
\begin{equation}\label{et}
t(x)\ =\ \int_1^x \frac{d\sigma}{\sqrt{\sigma^{2(1+\eps)} -1}},
\end{equation}
 which is defined for $t\in[0,b)$, with the maximum
$b$ equal to $\displaystyle\lim_{x\rightarrow +\infty}t(x)$ in \eqref{et}.
So, \eqref{Ex1} is incomplete whenever the power of the growth of
$|X|$ becomes bigger than the permitted linear one.\\[0.5mm]
(2) {\em Optimality of the bound for $\| F\|$.} For some $\eps>0$,
put $F \frac{d}{dx} =  \nu_\eps (x) \frac{d}{dx}$ with
$\nu_\eps(x)= (1+\eps) x^\eps$ for $x\ge 1$ and $X \equiv 0$.
Equation $\mathrm{(E_0)}$ reduces to
\begin{equation}\label{Ex2}
\ddot{x}(t) \ =\  (1+\eps)\, x^\eps(t)\ \dot{x}(t)
\end{equation}
whenever $x(t)\ge 1$. Thus, integrating with respect to $t$ and
considering the initial data $x(0)=\dot x(0)=1$, the solution
of equation \eqref{Ex2} satisfies
\[
\dot{x}(t) \ =\  x^{1+\eps}(t);
\]
whence, the solution of this Cauchy problem in this region is the
inverse of
\[
t(x)\ =\ \int_1^x  \frac{d\sigma}{\sigma^{1+\eps}}
\]
which shows the incompleteness of $x(t)$, as $\displaystyle\lim_{x\rightarrow
+\infty}t(x) < +\infty$. That is, incompleteness appears 
when $\|F\|$ is not bounded, even under slow
power growth.\\[0.5mm]
(3) {\em Role of the bound on $\|F\|$ for forward/backward
completeness.} Put now  $X \equiv 0$ and $F \frac{d}{dx} = - \mu
(x) \frac{d}{dx}$ where $\mu$ is defined on all $\R$ and
$\mu(x)=|x|$ for $|x|>1$. Equation $\mathrm{(E_0)}$ reduces to
\begin{equation}\label{Ex3}
\ddot{x}(t) \ =\ - |x(t)|\, \dot{x}(t)\qquad \hbox{whenever \,
$|x(t)|>1$.}
\end{equation}
As $F$ is self--adjoint and satisfies the hypothesis $(a_1)$ of
Theorem \ref{A}, then the solutions of \eqref{Ex3} are forward
complete. However, $x(t) = \frac{2}{t+1}$, $t \in (-1,0]$, yields
a backward inextensible and incomplete solution of \eqref{Ex3}.
Let us remark that $F$ may represent a  frictional force
(increasing with $|x|$ in an inhomogeneous medium), and the
backward incompleteness implies the divergence (with the lapse of
time) of the energy necessary to overcome such a force.}
\end{example}

\subsection{Trajectories under an autonomous potential}\label{s22}

Throughout this section we assume $X = -\nabla V$. If $F \equiv
0$, the completeness of inextensible solutions of equation
$\mathrm{(E_0)}$ has been studied in \cite[Theorem 2.1{\sl
(ii)}]{Go} if $V$ is bounded from below while in \cite{WM} if $V$
is unbounded from below (see also \cite[Theorem 3.7.15]{AM}).
Here, we generalize such results by including also the action of a
(1,1) tensor field $F$. In order to investigate the completeness
of equation $\mathrm{(E_0)}$, let us recall the following
comparison result (see \cite[Example 3.2.H]{AM} or \cite{CRS}).

\begin{lemma}[{\bf Comparison Lemma}]\label{comparison}
Let $\varphi :[0,+\infty) \to \R$ be a locally Lipschitz monotone
increasing function such that
\[
\varphi(s) > 0\quad \hbox{for all \quad $s\ge 0$}
\quad\hbox{and}\quad \int_0^{+\infty} \frac{ds}{\varphi(s)} =
+\infty.
\]
If an inextensible $C^1$ function $f=f(t)$ is such that
\[
\frac{df}{dt}(t)\ =\ \varphi(f(t))\quad \hbox{with $f(0) \ge 0$,}
\]
then it is defined for all $t \ge 0$.
\vspace{1mm}

Furthermore, if $h :[0,b) \to \R$ is a continuous function such
that $h(t) \ge 0$ for all $t \in [0,b)$ and
\[
\left\{\begin{array}{ll} \displaystyle h(t) \le h(0) + \int_0^t
\varphi(h(s)) d s  &\hbox{for all \;
$t \in [0,b)$,}\\[1mm]
h(0) \le f(0),&
\end{array}\right.
\]
then $h(t) \le f(t)$ for all $t \in [0,b)$.
\end{lemma}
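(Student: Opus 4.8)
The plan is to handle the two assertions separately: the first by a separation of variables, using the positivity of $\varphi$ and the divergence of $\int ds/\varphi$ to rule out blow--up in finite time; the second by a ``first contact'' comparison argument applied to a slightly perturbed initial datum, letting the perturbation tend to zero at the end.

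For the first assertion I would note that, since $\varphi>0$, the solution $f$ is strictly increasing, so $f(t)\ge f(0)\ge 0$ wherever it is defined, and the equation $\dot f=\varphi(f)$ can be integrated by the change of variable $\sigma=f(s)$ to give $\int_{f(0)}^{f(t)}d\sigma/\varphi(\sigma)=t$. Setting $\Psi(x):=\int_{f(0)}^{x}d\sigma/\varphi(\sigma)$ for $x\ge f(0)$, this $\Psi$ is $C^1$ with $\Psi'=1/\varphi>0$, hence strictly increasing, and $\Psi(x)\to+\infty$ as $x\to+\infty$ because $\int_0^{+\infty}d\sigma/\varphi(\sigma)=+\infty$ while $\int_0^{f(0)}d\sigma/\varphi(\sigma)<+\infty$ (as $\varphi$ is continuous and positive on the compact interval $[0,f(0)]$). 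Thus $\Psi$ is a bijection of $[f(0),+\infty)$ onto $[0,+\infty)$, and $t\mapsto\Psi^{-1}(t)$ is a solution of the Cauchy problem defined on all of $[0,+\infty)$. Since $\varphi$ is locally Lipschitz this solution is unique, so the inextensible $f$ coincides with it and is defined for every $t\ge 0$.

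For the second assertion I would fix $\delta>0$ and let $f_\delta$ solve $\dot f_\delta=\varphi(f_\delta)$ with $f_\delta(0)=f(0)+\delta\,(\ge 0)$; by the part just proved $f_\delta$ is defined on $[0,+\infty)\supseteq[0,b)$ and satisfies $f_\delta(t)=f_\delta(0)+\int_0^t\varphi(f_\delta(s))\,ds$. The claim is that $h(t)<f_\delta(t)$ for all $t\in[0,b)$. Otherwise, set $t_1:=\inf\{t\in[0,b):h(t)\ge f_\delta(t)\}$; since $h(0)\le f(0)<f_\delta(0)$, continuity forces $t_1>0$, $h(t_1)=f_\delta(t_1)$ and $h(s)<f_\delta(s)$ for $s\in[0,t_1)$. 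Then, using that $\varphi$ is monotone increasing together with the integral inequality for $h$,
\[
f_\delta(t_1)=h(t_1)\le h(0)+\int_0^{t_1}\varphi(h(s))\,ds\le f(0)+\int_0^{t_1}\varphi(f_\delta(s))\,ds<f_\delta(0)+\int_0^{t_1}\varphi(f_\delta(s))\,ds=f_\delta(t_1),
\]
which is absurd. Hence $h<f_\delta$ on $[0,b)$, and letting $\delta\to0^+$ and invoking continuous dependence on initial conditions (again a consequence of the local Lipschitz hypothesis) gives $h(t)\le f(t)$ on $[0,b)$.

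The only genuine subtlety is this last step: a first--contact argument run directly against $f$ would merely yield $h(t_1)=f(t_1)$ at a putative first crossing, with no contradiction, so the strict margin $\delta$ is needed to produce one and is then removed by continuity. The rest is routine bookkeeping — checking that $\Psi$ is a true bijection (where both the positivity of $\varphi$ and the divergence of $\int d\sigma/\varphi$ enter) and that $f_\delta$ is globally defined forward in time (which is where the first part of the lemma is reused).
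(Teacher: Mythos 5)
Your proof is correct; both halves are sound, including the two points that actually need care (the divergence of $\int_0^{+\infty}d\sigma/\varphi$ forcing $\Psi$ to be onto $[0,+\infty)$, and the $\delta$--perturbation that turns the first--contact argument into a strict contradiction before being removed by continuous dependence). The paper does not prove this lemma --- it recalls it from \cite{AM} and \cite{CRS} --- and your argument is the standard one those references rely on, so there is nothing further to reconcile.
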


According to \cite{WM} (see also \cite[Definition 3.7.14]{AM}) a
function $\V :[0,+\infty) \to \R$ is called {\em positively
complete}, if it is $C^1$, non--increasing and satisfies
\begin{equation}\label{epositivelycomplete}
\int_0^{+\infty} \frac {ds}{\sqrt{\alpha - \V(s)}}\ = +\infty,
\end{equation}
where $\alpha$ is a constant such that $\alpha > \V(0)$, hence
$\alpha > \V(s)$ for all $s \in [0,+\infty)$. It is easy to see
that this condition is independent of which $\alpha$ is chosen.

\begin{remark} \label{renuevo}
(1) If $\V$ is a positively complete function and  $\widetilde \V$
is a non--increasing $C^1$--function such that $\widetilde
\V(0)=\V(0)$ and $\widetilde \V\geq \V$ then $\widetilde \V$ is
also positively complete. So, a positively complete function $\V$
will be interesting when $\displaystyle\lim_{s\rightarrow
+\infty}\V(s)$ goes to $-\infty$ as fast as possible. Therefore, the
relevant limit equivalent to \eqref{epositivelycomplete} for any
non--increasing function on $[0,+\infty)$ is
 $\int_{s_0}^{+\infty} ds/\sqrt{-\V(s)}=+\infty,$
where $s_0$ is any point with $\V(s_0)<0$.

\smallskip
\noindent (2) In particular, the function
\[
\V(s)\ =\ - R_0\ s^2 \quad \hbox{where \,
$R_0 >0$,}
\]
is positively complete. Thus, so are, for example, $ \V(s)\ =\ -
s^\beta \ \log^\alpha(1+s) $ for any $\beta \in [0,2)$ and any
$\alpha>0$, as they decrease less fast than $-s^2$.
Anyway, choosing $\beta= 2$ and $0<\alpha\leq2$ or, with more generality, if $\V(s)$ is 
a $C^1$ non--increasing function on $[0,+\infty)$ such that
\[
\V(s)\ \approx\ - s^{2} (\log s)^2\dots (\underbrace{\log\log\dots\log}_{k - \hbox{times}} s)^2
\quad \hbox{if $s \to +\infty$,}
\]
for any $k \ge 1$, one also finds positively complete functions 
which decrease (slowly) faster than quadratically. 
Consistently with references \cite{AM}, \cite{AMR}, 
our results here will be stated by using positive completeness. With more generality, one could 
replace the hypothesis such as the at most linear behavior in Subsection \ref{dueuno} by a more general 
(but technical) assumption -- nevertheless, we prefer not to do this for the sake of simplicity.

\smallskip
\noindent (3) Notice that  functions such as $\V(s)= - R_0\
s^\beta$ are not positively complete when $\beta>2$. This agrees
with the fact that, for such a type of functions, $|\nabla \V|$ is
at most linear if and only if $\beta\leq 2$. So, even though our
overall linear bound for $|X|$ was the optimal power growth, some further results
will be obtained next.
\end{remark}

\begin{theorem}\label{A01}
Let $(\m,g)$ be a complete Riemannian manifold, $V$ a smooth
potential on $\m$ and $S$ the self--adjoint component of a smooth
time--in\-de\-pen\-dent $(1,1)$ tensor field $F$.
Let $\gamma: I \to \m$ be an inextensible solution of
\[
\hspace*{3.4cm}\frac{D\dot\gamma}{dt}(t)\ =\ F_{\gamma(t)}\
\dot\gamma(t) - \nabla
V_{\gamma(t)}.\hspace*{3.4cm}\mathrm{(E_0^*)}
\]
If $S$ satisfies condition $(a_1)$ (resp. $(a_1')$) in Theorem
\ref{A} and
\begin{itemize}
\item[$(a_3)$] a positively complete function $\V$ exists such that
\[
V(\gamma(t))\ \ge\ \V(d(\gamma(t),p_0)) \quad \mbox{for all \quad
$t \in I$,}
\]
for some $p_0\in M$,
\end{itemize}
then $\gamma$ is forward (resp. backward) complete.
\end{theorem}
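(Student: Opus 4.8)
The plan is to run the scheme of the proof of Theorem~\ref{A}, replacing the direct bound on $\tfrac{d}{dt}g(\dot\gamma,\dot\gamma)$ used there by a (non--conserved) energy estimate, and the linear--ODE trick by the Comparison Lemma~\ref{comparison}. As in Theorem~\ref{A} I would first reduce to the forward statement: suppose $\gamma$ is forward--inextensible on $I=[0,b)$ with $0<b<+\infty$ and aim at a contradiction. The backward statement then follows via the reparametrization $\gamma^*(t)=\gamma(-t)$, which solves $D\dot\gamma^*/dt=(-F)_{\gamma^*}\dot\gamma^*-\nabla V_{\gamma^*}$: here $(a_1')$ for $S$ along $\gamma$ becomes $(a_1)$ for the self--adjoint part $-S$ of $-F$ along $\gamma^*$, and $(a_3)$ passes to $\gamma^*$ with the same $\V$ and $p_0$. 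Since adding a constant to $V$ changes neither $\mathrm{(E_0^*)}$ nor the positive completeness of $\V$, I may assume in addition $\V\le 0$.

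Put $u(t)=g(\dot\gamma(t),\dot\gamma(t))$, let $\rho(t)=d(\gamma(t),p_0)$ (locally Lipschitz, with $|\dot\rho|\le\sqrt{u}$ a.e.), and introduce the energy $E(t)=\frac12 u(t)+V(\gamma(t))$. Differentiating along $\mathrm{(E_0^*)}$ and using \eqref{adj} gives $\dot E=g(\dot\gamma,S\dot\gamma)$, so $(a_1)$ yields
\[
\dot E\ \le\ c_\gamma u\ =\ 2c_\gamma\bigl(E-V(\gamma)\bigr)\ \le\ 2c_\gamma E+2c_\gamma\,(-\V(\rho)),
\]
using $(a_3)$ and the monotonicity of $\V$. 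The point is to break the coupling between $E$ and $\rho$, and I would do this by working with $\sigma(t):=\max_{[0,t]}\rho$, which is non--decreasing and locally Lipschitz: a Gr\"onwall estimate (integrating factor $e^{-2c_\gamma t}$), together with $\sigma$ non--decreasing and $\V$ non--increasing, then gives, on $[0,b)$,
\[
E(t)\ \le\ a+c\,(-\V(\sigma(t)))
\]
for suitable constants $a\ge 1$, $c>0$ depending only on $b$, $c_\gamma$ and $E(0)$.

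Next I would turn this into a closed scalar inequality for $\sigma$. At a.e.\ $t$ with $\dot\sigma(t)>0$ one has $\rho(t)=\sigma(t)$ and $\dot\sigma(t)=\dot\rho(t)\le\sqrt{u(t)}=\sqrt{2(E(t)-V(\gamma(t)))}\le\sqrt{2(E(t)-\V(\sigma(t)))}$, while the bound is trivial where $\dot\sigma(t)=0$; combined with the previous display this gives
\[
\dot\sigma(t)\ \le\ \varphi(\sigma(t))\ :=\ \sqrt{2\bigl(a+(c+1)(-\V(\sigma(t)))\bigr)}\qquad\text{a.e. on }[0,b).
\]
The function $\varphi$ is locally Lipschitz (since $\V\in C^1$ and $a-(c+1)\V\ge a>0$), positive and monotone increasing, and, because $\V$ is positively complete and $\V\le 0$, a one--line comparison of $a-(c+1)\V(s)$ with $\alpha-\V(s)$ shows $\int_0^{+\infty}ds/\varphi(s)=+\infty$ (cf.\ Remark~\ref{renuevo}(1)). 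Hence Lemma~\ref{comparison} applies: the solution $f$ of $\dot f=\varphi(f)$, $f(0)=\sigma(0)$, is defined on all of $[0,+\infty)$, and since $\sigma(t)\le\sigma(0)+\int_0^t\varphi(\sigma(s))\,ds$ with $\sigma(0)=f(0)$, its second part yields $\sigma(t)\le f(t)\le f(b)=:\bar\rho$ on $[0,b)$. Thus $\rho\le\bar\rho$ on $[0,b)$, so $E$ is bounded there, and then $u=2(E-V(\gamma))\le 2E-2\V(\rho)$ is bounded too; therefore $\dot\gamma([0,b))$ lies in a relatively compact subset of $T\m$ (the set of tangent vectors based in the closed ball $\overline{B}(p_0,\bar\rho)$ and of $g$--norm at most $\sup_{[0,b)}u$, compact by completeness of $(\m,g)$), and Lemma~\ref{extend} contradicts the maximality of $[0,b)$. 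Hence $b=+\infty$.

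I expect the crux to be precisely the coupling created by $F\neq0$: unlike the classical case $F=0$ of Weinstein--Marsden, the energy is not conserved, its growth is controlled only by $u$, and $u$ in turn depends on how far $\gamma$ has travelled (since $V$ may be very negative far away). Running the Gr\"onwall estimate against the running maximum $\sigma=\max_{[0,t]}\rho$ rather than $\rho(t)$ itself is what disentangles the system into the single inequality $\dot\sigma\le\varphi(\sigma)$; two minor points then require care, namely that $\sigma$ is absolutely continuous with $\dot\sigma=\dot\rho$ a.e.\ on $\{\dot\sigma>0\}$, and that the integral test for $\varphi$ is genuinely equivalent to positive completeness of $\V$ --- it is here that a cruder estimate such as $\rho^2\le b\int_0^t u$ would be too wasteful, since it would fail for the slowly--faster--than--quadratic potentials admitted in Remark~\ref{renuevo}(2).
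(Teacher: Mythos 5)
Your proof is correct and follows essentially the same route as the paper's: the energy inequality $\dot E=g(\dot\gamma,S\dot\gamma)\le c_\gamma u$, a Gr\"onwall--type estimate to bound $u$ by $\alpha-\V$ evaluated at a non--decreasing majorant of $d(\gamma(t),p_0)$, and then the Comparison Lemma together with positive completeness to close the scalar inequality. The only (cosmetic) difference is that you use the running maximum $\sigma(t)=\max_{[0,t]}d(\gamma(\cdot),p_0)$ where the paper uses the $C^1$ arc--length majorant $l_\gamma(t)=d(\gamma(0),p_0)+\int_0^t\sqrt{u}$, which avoids the measure--theoretic care you rightly flag; both choices work for exactly the reason you identify (the crude bound $\rho^2\le b\int_0^t u$ from Theorem~\ref{A} would be too lossy here).
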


\begin{proof}
As in the proof of Theorem \ref{A}, let $I= [0,b)$ and, by
contradiction, assume $0<b<+\infty$. Introducing again the squared norm
$u(t)$ (see \eqref{u}), it is enough to prove that inequality
\eqref{inequality3} holds for some constant $k > 0$ (boundedness
of $u$).

On one hand, by equations $\mathrm{(E_0^*)}$, \eqref{adj} and assumption $(a_1)$, for all
 $s\in[0,b)$ we have
\begin{equation}\label{dis1}
\frac{d}{ds}\Big(\frac{1}{2}\ u(s) + V\circ\gamma(s)\Big) \ =\
g(\dot\gamma(s),S\dot\gamma(s))\ \leq\ c_\gamma u(s).
\end{equation}
Hence, taking
\[
v(t)=\int_0^t u(s) ds, \quad \text{and thus} \quad \dot v(t)
=u(t),\; v(0) = 0,
\]
and integrating \eqref{dis1} on $[0,t]$, $t\in[0,b)$, we have
\begin{equation}\label{eq1}
\dot v(t) - 2 c_\gamma v(t)\, \le\ 2 (\alpha_\gamma -
V(\gamma(t))) \quad \text{for all}\quad t\in[0,b),
\end{equation}
with $\alpha_\gamma = \frac 12\ u(0) +
V(\gamma(0))$. On the other hand, we get
\begin{equation}\label{uno}
d(\gamma(t),p_0)\ \le\ d(\gamma(0),p_0) + d(\gamma(t),\gamma(0))\
\le\ l_\gamma(t) \quad\hbox{for all $t \in [0,b)$,}
\end{equation}
where we have put
\begin{equation}\label{tre}
l_\gamma(t)\ =\ d(\gamma(0),p_0) + \int_0^t
\sqrt{g(\dot\gamma(s),\dot\gamma(s))} ds.
\end{equation}
Thus, from \eqref{eq1}, assumption $(a_3)$ and \eqref{uno}, for all $t\in[0,b)$ we
obtain
\[
\dot v(t) - 2 c_\gamma v(t)\ \le\ 2 (\alpha_\gamma -
\V(d(\gamma(t),p_0))) \ \le\ 2 (\alpha_\gamma - \V(l_\gamma(t))),
\]
the last inequality as $\V$ is non--increasing. This property
also assures that, taking $\alpha > \max\{\alpha_\gamma,\V(0)\}$,  the function $v=v(t)$  satisfies
\begin{equation}\label{eq2}
\dot v(t) - 2 c_\gamma v(t)\ <\ 2 (\alpha - \V(l_\gamma(t)))
\qquad \hbox{for all $t\in[0,b)$,}
\end{equation}
and the right--hand side of the inequality is positive.

\vspace{2mm}

Now, let $v_0= v_0(t)$ be the solution of the associated equality
\begin{equation}\label{eq3}
\dot v_0(t) - 2 c_\gamma v_0(t)\ =\ 2 (\alpha - \V(l_\gamma(t)))
\end{equation}
with initial condition $v_0(0) = 0$; explicitly:
\begin{equation}\label{eq4}
v_0(t)\ =\ 2 \e^{2 c_\gamma t}\ \int_0^t \e^{-2 c_\gamma s}
\left(\alpha - \V(l_\gamma(s))\right) ds, \quad t\in[0,b).
\end{equation}
From \eqref{eq2} it follows that $v=v(t)$ is a subsolution of
\eqref{eq3} with $v(0) = v_0(0)$. Thus, applying Lemma
\ref{subsol} as before,
\[
v(t) < v_0(t) \quad\hbox{and}\quad u(t) = \dot v(t) < \dot v_0(t)
\;\; \quad \hbox{for all \; $t\in (0,b)$.}
\]
So, in order to estimate $\dot v_0(t)$, let us remark that
\eqref{eq4} implies
\[
\dot v_0(t)\ =\ 4 c_\gamma \e^{2 c_\gamma t}\ \int_0^t \e^{-2
c_\gamma s} \left(\alpha - \V(l_\gamma(s))\right) ds\ +\ 2
\left(\alpha - \V(l_\gamma(t))\right), \; t\in[0,b).
\]
Choosing any $t \in [0,b)$, the  properties of $\V$ imply $
\V(l_\gamma(s))\ \ge\ \V(l_\gamma(t))$ for all $s\in [0,t)$ and,
hence,
\[
\int_0^t \e^{-2 c_\gamma s} \left(\alpha - \V(l_\gamma(s))\right)
ds\ \le\ \frac{1}{2 c_\gamma} \left(1 - \e^{-2 c_\gamma t}\right)  \left(\alpha - \V(l_\gamma(t))\right).
\]
This implies
\[\begin{split}
\dot v_0(t)\ &\le\ 2 \e^{2 c_\gamma t}
\left(1 - \e^{-2 c_\gamma t}\right) \left(\alpha - \V(l_\gamma(t))\right) + 2 \left(\alpha - \V(l_\gamma(t))\right)\\
& \le\ k_\gamma \left(\alpha - \V(l_\gamma(t))\right)
\end{split}
\]
with $k_\gamma = 2 \e^{2 c_\gamma b}$ (note that $\alpha - \V$ is positive); whence,
\begin{equation}\label{eq5}
\sqrt{u(t)}\ \le \ \sqrt{k_\gamma \left(\alpha -
\V(l_\gamma(t))\right)} \qquad \hbox{for all \; $t\in[0,b)$.}
\end{equation}
On the other hand, from definition \eqref{tre} we have
\[
\frac{d l_\gamma}{dt}(t)\ =\ \sqrt{u(t)}\quad \hbox{and}\quad
l_\gamma(0)\ =\ d(\gamma(0),p_0).
\]
Thus, defining $\varphi(w) = \sqrt{k_\gamma \left(\alpha -
\V(w)\right)}$, from \eqref{eq5} it follows
\[
\frac{d l_\gamma}{dt}(t)\ \le \ \varphi(l_\gamma(t)) \qquad
\hbox{for all $t\in[0,b)$,}
\]
which implies
\begin{equation}\label{eq6}
l_\gamma(t)\ \le \ l_\gamma(0) + \int_0^t\varphi(l_\gamma(s)) ds
\qquad \hbox{for all $t\in[0,b)$.}
\end{equation}
Now, let $f=f(t)$ be the unique inextensible solution of the
Cauchy problem
\[
\left\{
\begin{array}{l}
\displaystyle \frac{df}{dt}(t)\ =\
 \varphi(f(t)),\\
f(0) = d(\gamma(0),p_0)\ (=l_\gamma(0) \ge 0).
\end{array}\right.
\]
As $\V$ is positively complete, a direct check of the properties of $\varphi$
implies that the first part of Lemma \ref{comparison} applies, so $f$ is defined for
all $t\geq 0$. Moreover, from \eqref{eq6} and the second part of Lemma \ref{comparison}
it follows $l_\gamma(t) \le f(t)$ for all $t \in [0,b)$. Thus, $l_\gamma(t)$
is bounded in $[0,b)$ and from \eqref{eq5} so is $u(t)$, as
required --the contradiction that $\gamma$ is extensible beyond
$b$ follows.

The case when $\gamma : (-b,0] \to \m$ is backward incomplete
follows analogously (as at the end of the proof of Theorem \ref{A}).
\end{proof}

The autonomous version of Theorem \ref{A02} is now a
straightforward consequence of the previous theorem and the
positive completeness of the function $\V (s)=-R_0s^2$ discussed
in Remark \ref{renuevo}. Concretely:

\begin{corollary}\label{c02} Let $(\m,g)$ be a complete
Riemannian manifold, $S$ the self--adjoint component of a smooth
time--independent $(1,1)$ tensor field $F$ on $\m$ and $V:\m
\to\R$ a smooth time--independent potential.  Assume that
$\|S\|<+\infty$ and $-V$ grows at most quadratically (i.e.,
$-V(p)\ \leq\ A\ d^2(p,p_0) +\ C$ in agrement with
\eqref{quadratic}).

Then each inextensible solution of $\mathrm{(E_0^*)}$
must be complete.  In particular, completeness of inextensible solutions of this
equation holds whenever $\m$ is compact.
\end{corollary}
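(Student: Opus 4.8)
The plan is to obtain Corollary \ref{c02} as an immediate consequence of Theorem \ref{A01}, applied once for forward completeness and once for backward completeness, combined with the fact (Remark \ref{renuevo}) that a quadratic function is positively complete. So I would fix an arbitrary inextensible solution $\gamma : I \to \m$ of $\mathrm{(E_0^*)}$ and simply check that, along $\gamma$, the hypotheses $(a_1)$, $(a_1')$ and $(a_3)$ of Theorems \ref{A} and \ref{A01} are met.

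For $(a_1)$ and $(a_1')$ I would use the definitions of $S_{\mathrm{sup}}$, $S_{\mathrm{inf}}$, $\|S\|$ together with \eqref{adj}: for every $v \in T\m$ one has $g(v,Sv) \le S_{\mathrm{sup}}\,g(v,v) \le \|S\|\,g(v,v)$ and $-g(v,Sv) \le -S_{\mathrm{inf}}\,g(v,v) \le \|S\|\,g(v,v)$, so that the constant $c_\gamma := \max\{\|S\|,1\}$ --- which is positive and does not even depend on $\gamma$ --- works in both $(a_1)$ and $(a_1')$. For $(a_3)$ I would fix $p_0\in\m$ with $-V(p) \le A\,d^2(p,p_0)+C$, i.e. $V(p) \ge -A\,d^2(p,p_0)-C$ for all $p\in\m$, and set $\V(s) := -A\,s^2 - C$. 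This $\V$ is $C^1$ and non--increasing on $[0,+\infty)$, and for any $\alpha > \V(0) = -C$
\[
\int_0^{+\infty}\frac{ds}{\sqrt{\alpha-\V(s)}} \;=\; \int_0^{+\infty}\frac{ds}{\sqrt{\alpha + C + A\,s^2}} \;=\; +\infty
\]
(trivially if $A=0$; otherwise because the integrand is asymptotic to $1/(s\sqrt{A})$ as $s\to+\infty$); hence $\V$ is positively complete, in the spirit of Remark \ref{renuevo}(2). Moreover $V(\gamma(t)) \ge -A\,d^2(\gamma(t),p_0)-C = \V(d(\gamma(t),p_0))$ for all $t\in I$, which is precisely $(a_3)$.

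With these verifications in hand, Theorem \ref{A01} applied to the pair $(a_1)$--$(a_3)$ gives that $\gamma$ is forward complete, and applied to the pair $(a_1')$--$(a_3)$ gives that $\gamma$ is backward complete; therefore $I=\R$, i.e. $\gamma$ is complete. For the compact case I would only remark that the unit tangent bundle of a compact $\m$ is compact, so the continuous function $v\mapsto g(v,Sv)$ is bounded on it and $\|S\|<+\infty$, while the continuous potential $V$ is bounded, hence $-V$ grows at most quadratically (with $A=0$), and the previous argument applies verbatim. I do not expect any genuine obstacle here: the only points requiring a line of care are that the constants in $(a_1)$/$(a_1')$ may be chosen positive and independent of $\gamma$ (immediate from $\|S\|<+\infty$) and the elementary computation that the quadratic comparison function $\V$ is positively complete.
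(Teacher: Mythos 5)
Your proposal is correct and follows essentially the same route as the paper, which states Corollary \ref{c02} precisely as a direct consequence of Theorem \ref{A01} together with the positive completeness of $\V(s)=-R_0 s^2$ noted in Remark \ref{renuevo}; you have merely written out the (straightforward) verification of $(a_1)$, $(a_1')$ and $(a_3)$ that the paper leaves implicit. Your treatment of the compact case (compactness of the unit tangent bundle giving $\|S\|<+\infty$ and boundedness of $V$) is likewise the intended argument.
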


\section{The non--autonomous case}\label{s3}

Next, the  non--autonomous case (E) will be reduced to the
autonomous one by working on the manifold $M\times \R$ (compare
with the classical approach in \cite[pp. 121--124]{CM}, for
instance), and the two main theorems in the Introduction will be
proven. Again, we consider first the general case. An analogous
reasoning also proves the case when the external force comes from
a potential, which is then widely analyzed.

\subsection{General result}

By taking into account the standard results on existence and
uniqueness of solutions to second order differential equations,
for each $(v_p,t_{0})\in TM\times \R$ ($p\in \m$, $v_p \in T_p\m$)
we can consider the unique inextensible solution
$\gamma_{_{(v_p,t_{_0})}}$ of (E) which satisfies
$\gamma_{_{(v_p,t_{_0})}}(t_0)= p$ and
$\dot\gamma_{_{(v_p,t_{_0})}}(t_0)=v_p$.

\begin{lemma}\label{vector_field}
There exists a unique vector field $G$ on $TM\times \R$ such that
the curves $t\mapsto \big(\dot\gamma_{_{(v_p,t_{_0})}}(t),t\big)$
are the integral curves of $G$.
\end{lemma}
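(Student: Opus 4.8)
The plan is to construct $G$ as the natural lift to $TM\times\R$ of the second order equation (E), mimicking the classical construction of the geodesic spray but now on the extended configuration space $M\times\R$. First I would fix local coordinates: let $(x^1,\dots,x^n)$ be coordinates on an open set $U\subset M$, inducing natural coordinates $(x^i,v^i)$ on $TU\subset TM$, and adjoin the extra coordinate $t$ on the $\R$ factor, so that $(x^i,v^i,t)$ are coordinates on $TU\times\R\subset TM\times\R$. In these coordinates equation (E) for a curve $\gamma$ with $\gamma^i = x^i\circ\gamma$ reads
\[
\ddot\gamma^i(t)\ =\ -\Gamma^i_{jk}(\gamma(t))\,\dot\gamma^j(t)\dot\gamma^k(t)\ +\ F^i_{\ j}(\gamma(t),t)\,\dot\gamma^j(t)\ +\ X^i(\gamma(t),t),
\]
where $\Gamma^i_{jk}$ are the Christoffel symbols of $g$. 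This is a first order system for $(\gamma^i,\dot\gamma^i,t)$, which dictates the definition of $G$: on $TU\times\R$ set
\[
G_{(x,v,t)}\ =\ v^i\,\frac{\partial}{\partial x^i}\ +\ \Big(-\Gamma^i_{jk}(x)\,v^jv^k + F^i_{\ j}(x,t)\,v^j + X^i(x,t)\Big)\frac{\partial}{\partial v^i}\ +\ \frac{\partial}{\partial t}.
\]
By construction $G$ is smooth (smoothness of $g$, $F$, $X$), and a curve $\sigma(s)=(x(s),v(s),\tau(s))$ is an integral curve of $G$ with $\tau(s_0)=t_0$ exactly when $\tau(s)=s-s_0+t_0$, $v=\dot x$, and $x(\cdot)$ solves (E) reparametrized so that $s$ corresponds to the time $\tau$; choosing the parametrization $s=t$ one gets precisely the curves $t\mapsto(\dot\gamma_{(v_p,t_0)}(t),t)$.

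The two things to check are well-definedness (coordinate independence) and the claimed characterization together with uniqueness. For coordinate independence I would verify that the local expressions above patch together: on overlaps the first and third terms transform correctly because $v^i\partial_{x^i}$ is the canonical (tautological) vector field structure and $\partial_t$ is intrinsic, while the combination $-\Gamma^i_{jk}v^jv^k$ in the $\partial_{v^i}$ slot is exactly the correction term that makes $v^i\partial_{x^i}-\Gamma^i_{jk}v^jv^k\partial_{v^i}$ the well-defined geodesic spray of $g$ pulled up to $TM\times\R$; finally $F^i_{\ j}(x,t)v^j+X^i(x,t)$ in the $\partial_{v^i}$ slot transforms tensorially because $F$ is a $(1,1)$ tensor field along $\pi$ and $X$ a vector field along $\pi$, so $F^i_{\ j}v^j+X^i$ are the components of a genuine tangent vector at $(x,t)$. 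Alternatively, and more cleanly, I would simply say: $G$ is the sum of (the $TM\times\R$--lift of) the geodesic spray, (the vertical lift of) the velocity-dependent field $(v_p,t)\mapsto F_{(p,t)}v_p$, (the vertical lift of) $X$, and $\partial/\partial t$; each summand is intrinsically defined, hence so is $G$.

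For the characterization, the key observation is that for an integral curve $s\mapsto\sigma(s)$ of $G$, the last component satisfies $d\tau/ds=1$, so $\tau(s)=s+\mathrm{const}$, and the $x$--component satisfies $dx^i/ds=v^i$ while the $v$--component then satisfies exactly the second order ODE (E) in the variable $\tau$. Conversely, given the inextensible solution $\gamma_{(v_p,t_0)}$ of (E), the curve $t\mapsto(\dot\gamma_{(v_p,t_0)}(t),t)$ — read in coordinates as $(\gamma^i(t),\dot\gamma^i(t),t)$ — is by the displayed system an integral curve of $G$ with the right initial value $(v_p,t_0)$ at $s=t_0$. Uniqueness of $G$ follows because its integral curves are prescribed through every point of $TM\times\R$ (every $(v_p,t_0)$ occurs, by existence of solutions of (E)), and a vector field is determined by its integral curves; concretely, $G_{(v_p,t_0)}$ must equal the velocity at $s=t_0$ of $s\mapsto(\dot\gamma_{(v_p,t_0)}(s),s)$, which pins down $G$ pointwise. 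The only mildly delicate point — the main ``obstacle'' — is the bookkeeping showing that the non-intrinsic Christoffel term combines with the rest into an honest vector field; but this is precisely the standard fact that the geodesic spray is globally defined, to which one appeals (as in \cite{ON}), the new ingredients $F$, $X$, $\partial_t$ being manifestly tensorial/intrinsic.
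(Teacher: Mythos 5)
Your proposal is correct. It differs in presentation from the paper's own proof, though not in substance: the paper defines $G$ pointwise and intrinsically by
$G_{(v_p,t_0)}=\frac{d^2}{dt^2}\big|_{t=t_0}\gamma_{(v_p,t_0)}(t)+\frac{\partial}{\partial t}\big|_{(v_p,t_0)}$, i.e.\ as the velocity at $t_0$ of the very curve $t\mapsto(\dot\gamma_{(v_p,t_0)}(t),t)$, and then verifies the integral--curve property by checking that the map $\Psi\colon(s,(v_p,t_0))\mapsto(\dot\gamma_{(v_p,t_0)}(t_0+s),t_0+s)$ is a local flow (the group property $\Psi_{s+t}=\Psi_s\circ\Psi_t$ following from uniqueness of solutions of (E)). You instead build $G$ by its local coordinate expression (geodesic spray plus vertical lifts of $Fv$ and $X$ plus $\partial_t$) and check coordinate independence and the ODE bookkeeping directly; this is exactly the construction the authors record in the Remark immediately after the lemma as an alternative definition of $G$. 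The trade--off is the usual one: the flow argument gets intrinsic well--definedness for free but must invoke smooth dependence on initial conditions to see that $G$ is smooth, whereas your coordinate construction makes smoothness manifest at the cost of the patching verification, which you handle correctly by isolating the spray as the only non--tensorial piece. Your uniqueness argument (the prescribed curves pass through every point of $TM\times\R$ and their velocities pin down $G$ pointwise) is the same observation the paper opens its proof with. No gaps.
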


\begin{proof}
Obviously, if such a $G$ exists, then it must be defined as
\[
G_{(v_p,t_{_0})}\ =\ \left.
\frac{d^2}{dt^2}\right|_{_{t=t_{_0}}}\gamma_{_{(v_p,t_{_0})}}(t)
+\left. \frac{\partial}{\partial t}\right|_{_{(v_p,t_{_0})}}.
\]
To check that its integral curves satisfy the required property,
let us consider
\[
\begin{array}{rccc}
\Psi: {\cal D} \subset & \R\times (TM\times \R) &\rightarrow &
TM\times \R \\ & (s,(v_p,t_0)) & \mapsto &
\big(\dot\gamma_{_{(v_p,t_0)}}(t_0+s), t_0+s\big)
\end{array}
\]
where ${\cal D}$ is the maximal domain of definition of $\Psi$ in
$\R\times (TM\times \R)$ ---recall that ${\cal D}\cap (\R\times
\{(v_p,t_0)\})$ is always an open interval, which contains $0$
(multiplied by $\{(v_p,t_0)\}$). Clearly, $\Psi$ defines a local
action on $TM\times \R$ (namely, $\Psi_{s+t}= \Psi_{s}\circ
\Psi_{t}$ whenever it makes sense) and the result follows.
\end{proof}

\begin{remark} Alternatively to the previous lemma, the vector
field $G$ may be defined locally as follows: let $(U;x^1,...,x^n)$
be a coordinate neighborhood on $M$ and consider the natural
coordinates $(x^1,...,x^n,\dot x^1,...,\dot x^n)$ on
$\pi_M^{-1}(U)$, where $\pi_M$ is the projection from $TM$ onto
$M$, i.e., for any $p\in U, v_p\in T_pM$: $x^i(v_p)\equiv x^i(p)$,
$\dot x^i(v_p)\equiv dx_p^i(v_p)$, $1 \leq i \leq n$. On
$\pi_M^{-1}(U) \times \R$, we have
\[
\begin{split}
G_{(v_p,t)}\ =\ &\sum_{i=1}^{n}\dot x^i(v_p) \left.
\frac{\partial}{\partial x^i}\right|_{v_p}
-\sum_{i=1}^{n}\left(\sum_{j,k=1}^n\Gamma_{j,k}^{i}(p)\dot
x^j(v_p)\dot x^k(v_p)\right) \left. \frac{\partial}{\partial \dot
x^i}\right|_{v_p}\\
&
+\sum_{i=1}^{n}\left(X^i(p,t)+\sum_{j=1}^{n}\dot
x^j(v_p)F_j^i(p,t)\right) \left. \frac{\partial}{\partial \dot
x^i}\right|_{v_p}
+ \left.  \frac{\partial}{\partial t}\right|_{t},
\end{split}
\]
where $\Gamma_{j,k}^{i}$ are the corresponding Christoffel
symbols, $X^i(p,t)=dx_p^i(X_{(p,t)})$ and\\
$F_j^i(p,t)= dx_p^i\left(F_{(p,t)}\,\left.\frac{\partial}{\partial
x^j}\right|_p\right)$.
\end{remark}

Now, in order to be able to use the previous results for the
autonomous case, consider the trajectories of equation ({\rm E})
as ``some'' of the integral curves of a vector field $\tilde G$ on
the tangent bundle $T(M\times \R)$. First, note that the
time--dependent tensor fields $X$ and $F$ on $M$ naturally induce
tensor fields $\tilde X, \tilde F$ on $\tilde M:= M\times \R$,
namely:
\begin{equation}\label{def1}
\begin{array}{c}
\tilde X_{(p,t_{_0})}= (X_{(p,t_{_0})},0)\equiv
X_{(p,t_0)}, \\[3mm]
\tilde
F_{\left(p,t_{_0}\right)}\left(v_p,\,s\,\frac{d}{dt}\mid_{_{t_{_0}}}\right)=\left(F_{(p,t_{_0})}(v_p),0
\right)\equiv F_{(p,t_0)}(v_p).
\end{array}\end{equation}
Now, consider also
the natural product Riemannian metric $\tilde g = g\oplus dt^2$ on
$M\times \R$, and denote by $\tilde D/dt$ the corresponding
covariant derivative.

\begin{proposition}\label{ptilde}
A curve $\tilde\gamma(t)=(\gamma(t),\tau(t))$  on $\m\times\R$
 solves
\[
\hspace*{3.4cm}\frac{\tilde D\dot{\tilde\gamma}}{dt}(t)\ =\ \tilde
F_{\tilde\gamma(t)}\, \dot{\tilde \gamma}(t) + \tilde
X_{\tilde\gamma(t)},\hspace*{3.4cm}\mathrm{({\tilde E}_0)}
\]
if and only if $\gamma$ solves $\mathrm{(E)}$ and $\tau(t)=a t+b$ for some
$a$, $b \in\R$.

Therefore, if the inextensible solutions $\tilde \gamma$ of
$\mathrm{({\tilde E}_0)}$ are complete, then so are the
trajectories for the time--dependent equation $\mathrm{(E)}$.
\end{proposition}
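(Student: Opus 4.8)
The plan is to use the product structure of $(\m\times\R,\tilde g=g\oplus dt^2)$ to split equation $\mathrm{({\tilde E}_0)}$ into an $\m$-component and an $\R$-component, to identify the $\R$-component as $\ddot\tau\equiv 0$, and then to deduce completeness of $\mathrm{(E)}$ by lifting each inextensible solution $\gamma$ of $\mathrm{(E)}$ to the curve $t\mapsto(\gamma(t),t)$ in $\m\times\R$.

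First I would recall that the Levi--Civita connection of a Riemannian product is the direct sum of the two Levi--Civita connections; in particular, for a curve $\tilde\gamma(t)=(\gamma(t),\tau(t))$ one has
\[
\frac{\tilde D\dot{\tilde\gamma}}{dt}(t)\ =\ \Bigl(\frac{D\dot\gamma}{dt}(t),\ \ddot\tau(t)\Bigr),
\]
the second slot being the ordinary second derivative since $\R$ carries the trivial connection. On the other side of $\mathrm{({\tilde E}_0)}$, the definitions \eqref{def1} give immediately $\tilde X_{\tilde\gamma(t)}=(X_{(\gamma(t),\tau(t))},0)$ and, since $\tilde F$ annihilates the $\partial/\partial t$-component of $\dot{\tilde\gamma}(t)=(\dot\gamma(t),\dot\tau(t)\,\tfrac{d}{dt})$, also $\tilde F_{\tilde\gamma(t)}\,\dot{\tilde\gamma}(t)=(F_{(\gamma(t),\tau(t))}\dot\gamma(t),0)$. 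Comparing components, $\mathrm{({\tilde E}_0)}$ is equivalent to the system consisting of $\ddot\tau\equiv 0$ (that is, $\tau(t)=at+b$) together with $\frac{D\dot\gamma}{dt}(t)=F_{(\gamma(t),\tau(t))}\dot\gamma(t)+X_{(\gamma(t),\tau(t))}$; the latter, read with the time variable supplied by $\tau$, is equation $\mathrm{(E)}$ --- and literally so when $a=1$, $b=0$. This is the asserted equivalence.

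For the last statement, let $\gamma:I\to\m$, $0\in I$, be an inextensible solution of $\mathrm{(E)}$ and set $\tilde\gamma(t)=(\gamma(t),t)$, which by the equivalence solves $\mathrm{({\tilde E}_0)}$. I would check that $\tilde\gamma$ is inextensible: if some solution $\hat{\tilde\gamma}=(\hat\gamma,\hat\tau)$ of $\mathrm{({\tilde E}_0)}$ extended $\tilde\gamma$ beyond $I$, then by the equivalence $\hat\tau$ is affine, and agreeing with $t$ on $I$ it equals $t$ throughout, so $\hat\gamma$ would solve $\mathrm{(E)}$ on a strictly larger interval --- contradicting the maximality of $\gamma$. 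Hence, assuming all inextensible solutions of $\mathrm{({\tilde E}_0)}$ are complete, $\tilde\gamma$ is complete, so $I=\R$ and $\gamma$ is complete; the forward and backward variants follow in the same way.

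The only non-formal point, and thus the main difficulty, is the splitting of $\tilde D/dt$ along a curve in a Riemannian product; everything afterwards is bookkeeping with \eqref{def1}. A minor care is required to see that inextensibility passes to the lift, but this is precisely because the $\R$-dynamics $\ddot\tau=0$ is itself complete and so can never obstruct an extension.
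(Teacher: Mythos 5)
Your proof is correct and follows essentially the same route as the paper: split $\tilde D\dot{\tilde\gamma}/dt=\bigl(D\dot\gamma/dt,\ddot\tau\bigr)$ via the product connection, use \eqref{def1} to split the right-hand side, and lift an inextensible solution of $\mathrm{(E)}$ to $t\mapsto(\gamma(t),t)$. The paper states these two steps in a single line each; your extra care about the inextensibility of the lift and about the role of $\tau(t)=at+b$ versus $\tau(t)=t$ is a welcome but not essentially different elaboration.
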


\begin{proof} The first assertion follows from
 $\frac{\tilde D\dot{\tilde\gamma}}{dt}=(\frac{D\dot{\gamma}}{dt},\ddot \tau)$
and formulae \eqref{def1}. For the last one, if $\gamma$ is any
inextensible solution of equation {\rm (E)}, the corresponding
curve $\tilde\gamma(t)=(\gamma(t),t)$ is an inextensible solution
of $(\tilde{E}_0)$; by assumption, $\tilde\gamma$ is complete and
so is $\gamma$.
\end{proof}

Now, we are in position to prove Theorem \ref{A1}. 

\smallskip

\begin{proof}[Proof of Theorem \ref{A1}]
If a trajectory $\gamma:[0,b)\rightarrow M$ were forward
inextensible with $0< b<+\infty$, then it should lie in a region
such as $M\times [-T,T]$ with $b < T$. By  Proposition
\ref{ptilde}, a contradiction will follow by proving the
extendability to $b$ of the trajectory $\tilde
\gamma(t)=(\gamma(t),t)$ for $\tilde M, \tilde g, \tilde X,\tilde
F$. Recall that in the region $M\times [-T,T]$, the $g$--bounds
\eqref{bx2} for $X$ and \eqref{bf} for $S$ are equal to their
counterparts \eqref{bx} for $\tilde X$ and the boundedness of the
self--adjoint part of $\tilde F$ with respect to $\tilde g$
(recall that the $\tilde g$--distance can be easily bounded in
terms of $g$ and $dt^2$, and the distance on the $dt^2$ side is
bounded by $2T$). Thus, the forward extendability of $\tilde
\gamma$ follows from Theorem \ref{A}, as required. Similar
arguments apply for proving the backward completeness case.
\end{proof}

\subsection{Trajectories under a non--autonomous potential}
\label{four}

 Throughout this subsection,  the non--autonomous problem
$\mathrm{(E^*)}$ is  studied.

\begin{remark} {\rm As a difference with Theorem \ref{A1}, now the main result
(Theorem \ref{A02}) will not be reduced directly  to the
autonomous case. The reason is that if we consider
$\tilde M, \tilde g, \tilde F$ as in the previous subsection, and
put $\tilde V: M\times \R\rightarrow \R$ simply equal to $V$, then
$\tilde \nabla \tilde V= (\nabla^MV,\partial V/\partial t)$. That
is, the component $\partial V/\partial t$ makes intrinsically
different the trajectories for $\tilde\nabla \tilde V$ and
$\nabla^M V$ (an analogous to Proposition \ref{ptilde} does not
hold). Instead, we will modify directly the proof of
Theorem \ref{A01}.}\end{remark}

\smallskip

\begin{proof}[Proof of Theorem \ref{A02}] Let $\gamma:[0,b)\rightarrow M$
be a forward inextensible solution with $0 < b<+\infty$ included
in some region $M\times [-T,T]$ with $b < T$, and let $N_T$ be as
in \eqref{bf} and $A_T, C_T$ be the constants determined by the
allowed  growth of $U=\partial V/\partial t$ in \eqref{quadratic}.
Thus, considering the steps of the proof of Theorem \ref{A01},
from \eqref{uno} and \eqref{tre} we have
\begin{eqnarray}
\frac{d}{ds}\Big(\frac{1}{2}\ u(s) + V(\gamma(s),s)\Big) \  &\leq&\
N_T u(s) + \frac{\partial V}{\partial s}(\gamma(s),s)\nonumber\\
&\le&\ N_T u(s) + A_T d^2(\gamma(s),p_0) + C_T\nonumber\\
&\le& \ N_T u(s) + A_T l_\gamma^2(s) + C_T.\label{dis1mod}
\end{eqnarray}
As
\[
\int_0^t l_\gamma^2(s) ds\ \le \ T l_\gamma^2(t)\quad \hbox{for all $t \in [0,b)$,}
\]
integrating \eqref{dis1mod} we have that equation \eqref{eq1} changes into:
\begin{equation} \label{eq1mod}
\dot v(t) - 2 N_T v(t)\, \le\ 2 (\alpha_\gamma -
V(\gamma(t),t)) + 2 T A_T l_\gamma^2(t)
\end{equation}
 for all $ t\in[0,b)$, with $\alpha_\gamma = \frac12 u(0) + V(\gamma(0),0) + T C_T$.
Taking into account also the at most quadratic bound for $-V$, we
can choose constants $A$, $C > 0$ and construct the
function $\V (s) = -A s^2 -C$ so that \eqref{eq1mod} yields:
\begin{equation}\label{eq1mod2}
\dot v(t) - 2 N_T v(t)\, <\ 2 (\alpha - \V(l_\gamma(t)))
\qquad \text{for all}\quad t\in[0,b),
\end{equation}
where  $\alpha > $ max$\{\alpha_\gamma, \V(0)\}$. Equation
\eqref{eq1mod2} is formally equal to \eqref{eq2} in Theorem
\ref{A01}. So, reasoning as in the proof of Theorem \ref{A01}
we show the extendability
of $\gamma$ through $b$, a contradiction.

Vice versa, if $\gamma :(-b,0] \to \m$ is a backward inextendible solution
with $0<b<+\infty$, we can consider $T>b$ and $\tilde \gamma(t) := \gamma(-t)$ in $[0,b)$
and, as from the lower boundedness of $S$ and the quadratic growth of $-\frac{\partial V}{\partial t}$
along finite times it follows
\[
\begin{split}
\frac{d}{ds}\Big(\frac{1}{2}\ u(-s) + V(\gamma(-s),-s)\Big) \ &=\
- g\left(S_{(\gamma(-s),-s)} \dot\gamma(-s),\dot\gamma(-s)\right)\\
&\quad - \frac{\partial V}{\partial s}(\gamma(-s),-s)\\
&\le\ N_T u(-s) + A_T d^2(\gamma(-s),p_0) + C_T,
\end{split}
\]
we repeat the proof with \eqref{eq1mod} stated for $\tilde\gamma(t)$.
\end{proof}

\begin{remark} \label{rgordon} The maximum allowed  growth permitted for $\partial
V/\partial t$ in Theorem \ref{A02} is both, very general and
consequent with our other hypotheses. However, checking the
proofs, other possible bounds could be taken into account. In
order to discuss the accuracy of our bound for $\partial
V/\partial t$, next: (a) we will compare
Theorem \ref{A02} with the consequences of Theorem \ref{A1} for
potentials, and (b) we introduce an alternative bound on
$\partial V/\partial t$ applicable when $V$ is lower bounded along
finite times. This suggests that, even though the optimality of
all the other bounds have been carefully checked previously, the
optimal bounds for $\partial V/\partial t$ can be studied further.
\end{remark}

According to the claim (a) in Remark \ref{rgordon}, the
application of Theorem \ref{A1} for the case of potentials yields:

\begin{corollary}\label{A10}
Let $(\m,g)$ be a complete Riemannian manifold, consider a $(1,1)$
tensor field $F$, eventually time--dependent, with self--adjoint
component $S$, and let $V:\m\times\R \rightarrow \R$ be a smooth
potential. If $S$ is bounded along finite times and $\nabla^\m
V(p,t)$ grows at most linearly in $\m$ along finite times, then
each inextensible solution of $\mathrm{(E^*)}$ must be complete.
\end{corollary}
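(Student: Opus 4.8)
The plan is to reduce the statement directly to Theorem \ref{A1}, since the hypotheses have been carefully arranged so that the potential case becomes a special case of the general case. First I would set $X_{(p,t)} := -\nabla^\m V(p,t)$, which is a smooth vector field along the projection $\pi: \m\times\R\to\m$ because $V$ is smooth. With this choice, equation $\mathrm{(E^*)}$ is literally equation $\mathrm{(E)}$ for the data $(F, X)$. Thus it suffices to verify that $(F, X)$ satisfies the hypotheses of Theorem \ref{A1}.

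The self--adjoint part $S$ of $F$ is the same tensor field in both statements, and it is assumed bounded along finite times in the hypothesis of the corollary, so that condition of Theorem \ref{A1} holds verbatim. For the remaining condition, I need that $X = -\nabla^\m V$ grows at most linearly in $\m$ along finite times in the sense of \eqref{bx2}; but $|X|_{(p,t)} = |\nabla^\m V(p,t)|_p$, and the hypothesis that $\nabla^\m V(p,t)$ grows at most linearly in $\m$ along finite times is precisely the assertion that for each $T>0$ there are $p_0 \in \m$ and $A_T, C_T > 0$with $|\nabla^\m V(p,t)|_p \le A_T\, d(p,p_0) + C_T$ for all $(p,t) \in \m\times[-T,T]$. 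Hence \eqref{bx2} holds for $X$.

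With both hypotheses of Theorem \ref{A1} in force, that theorem gives that every inextensible solution of $\mathrm{(E)}$ for the data $(F, -\nabla^\m V)$ — that is, every inextensible solution of $\mathrm{(E^*)}$ — is complete, as claimed. There is no real obstacle here: the only point requiring any care is the bookkeeping that the three linear-growth conditions \eqref{bx}, \eqref{bx2} applied to $X=-\nabla^\m V$ coincide exactly with the stated growth hypothesis on $\nabla^\m V$, and that the product-metric reduction inside the proof of Theorem \ref{A1} already absorbs the time variable; both were discussed in Remark 5(3), where it was also noted that this corollary is genuinely weaker than Theorem \ref{A02}, since linear growth of $\nabla^\m V$ is strictly stronger than quadratic growth of $-V$.
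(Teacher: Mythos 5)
Your proof is correct and is exactly the paper's argument: the corollary is presented there precisely as ``the application of Theorem \ref{A1} for the case of potentials,'' i.e.\ one sets $X=-\nabla^{\m}V$ and observes that the growth hypothesis on $\nabla^{\m}V$ is literally condition \eqref{bx2} for $X$. One small caveat on your closing aside: the paper's comparison shows the corollary is a particular case of Theorem \ref{A02} only in the \emph{autonomous} case, whereas in the non--autonomous case the two results are independent (the corollary imposes no bound at all on $\partial V/\partial t$), so it is not ``genuinely weaker'' than Theorem \ref{A02}.
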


\begin{proof}[Relation between Corollary \ref{A10} and Theorem \ref{A02}]
Choose $p_0\in M$ and, by using the completeness of $g$, take a
starshaped domain $\mathcal{D}\subset T_{p_0}M$ so that the
exponential map $\exp_{p_0}: \mathcal{D}\rightarrow M$ is a
diffeomorphism onto its image $\exp_{p_0}(\mathcal{D})$, and this
image is dense in $M$. Under the hypotheses of Corollary
\ref{A10}, let $A, C:[0,+\infty)\rightarrow [0,+\infty)$ be
strictly increasing $C^1$ functions which satisfy $A(n)>A_{n+1}$,
$C(n)>C_{n+1}$ where $A_{n+1}, C_{n+1}$ are the constants $A_T,
C_T$ obtained in \eqref{bx2} (from the at most linear growth of
$\nabla^\m V$) for $T=n+1$, and $n$ is any nonnegative integer.
Now, for each $p\in\exp_{p_0}(\mathcal{D})$ let
$\gamma_p:[0,l_p]\rightarrow \exp_{p_0}(\mathcal{D})$ be the
unique unit geodesic from $p_0$ to $p$. Clearly,
\[
\begin{split}
&-(V(p,l_p) - V(p_0,0))\ =\ -\int_0^{l_p}\frac{d V}{d s}(\gamma_p(s),s) ds\\
&\quad\qquad =\ -\int_0^{l_p}\big(g(\nabla^MV(\gamma_p(s),s),\dot\gamma_p(s))+\frac{\partial
V}{\partial s}(\gamma_p(s),s)\big) ds \\
&\quad\qquad \leq\ \int_0^{l_p} |\nabla^M V(\gamma_p(s),s)| ds -
\int_0^{l_p} \frac{\partial V}{\partial s}(\gamma_p(s),s) ds\\
&\quad\qquad \leq\ A(l_p) l_p^2 + C(l_p) -\int_0^{l_p} \frac{\partial
V}{\partial s}(\gamma_p(s),s) ds.
\end{split}
\]
Taking into account that $l_p=d(p_0,p)$ and the density of
$\exp_{p_0}(\mathcal{D})$ we have: (i) in the autonomous case
($\frac{\partial V}{\partial s}\equiv 0$), $-V$ grows at most
quadratically, that is, Corollary \ref{A10} is a particular case
of Theorem \ref{A02}, (ii) in the non--autonomous case, both
results are independent: Corollary \ref{A10} does not require any
bound for $\frac{\partial V}{\partial s}$, and the bound required
by Theorem \ref{A02} is independent of the relation between the
bounds for $-V$ and $|\nabla^MV|$.
\end{proof}

According to the claim (b) in Remark \ref{rgordon}, consider the
following result\footnote{This result can be proven easily by
using the previous techniques. At any case, the full details will
be written in the proceedings paper \cite{CRS}, where the results
of the present article were announced.}:

\begin{proposition}\label{G01} Let $(\m,g)$ be a complete Riemannian manifold, $F$ a
smooth time--independent $(1,1)$ tensor field with self--adjoint
component $S$ and $V:\m\times\R \rightarrow \R$ a smooth
potential. Assume that $\|S\|$ is bounded, and there exist
continuous functions
$\alpha_0, \beta_0: \R\rightarrow \R$, $\alpha_0, \beta_0 >0$
such that $V(p,t)>\beta_0(t)$ (i.e., $V$ is bounded from below along finite times)
and:
\[
\left|\frac{\partial V}{\partial t}(p,t)\right|\ \le\ \alpha_0(t)
(V(p,t) - \beta_0(t))\quad \hbox{for all \, $(p,t)\in \m\times
\R$.}
\]
Then, each inextensible solution of equation $\mathrm{(E^*)}$ must be
complete.
\end{proposition}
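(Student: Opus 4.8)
The plan is to mimic the proof of Theorem \ref{A01}, replacing the scalar potential estimates by the new differential inequality for $\partial V/\partial t$. Fix a forward inextensible solution $\gamma:[0,b)\to M$ with $0<b<+\infty$, lying in a slab $M\times[-T,T]$ with $b<T$, and set $u(t)=g(\dot\gamma(t),\dot\gamma(t))$ as in \eqref{u}. On the slab, $\|S\|$ bounded gives a uniform constant $c_\gamma$ as in condition $(a_1)$, and the continuous functions $\alpha_0,\beta_0$ admit uniform bounds $\alpha_0(t)\le\bar\alpha$, $\inf_{[-T,T]}\beta_0(t)=:\bar\beta$ on $[-T,T]$. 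First I would consider the ``energy'' $e(t):=\tfrac12 u(t)+V(\gamma(t),t)-\bar\beta$, which is strictly positive since $V>\beta_0\ge\bar\beta$. Differentiating along $\gamma$ using $\mathrm{(E^*)}$ and \eqref{adj},
\[
\dot e(t)\ =\ g(\dot\gamma(t),S\dot\gamma(t))+\frac{\partial V}{\partial t}(\gamma(t),t)\ \le\ c_\gamma u(t)+\bar\alpha\,\bigl(V(\gamma(t),t)-\bar\beta\bigr)\ \le\ (2c_\gamma+\bar\alpha)\,e(t),
\]
where in the last step I bounded $u(t)\le 2e(t)$ and $V-\bar\beta\le e(t)$ (both hold because the two summands of $e$ are nonnegative). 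Grönwall's inequality then yields $e(t)\le e(0)\,\mathrm{e}^{(2c_\gamma+\bar\alpha)t}\le e(0)\,\mathrm{e}^{(2c_\gamma+\bar\alpha)b}=:K$ for all $t\in[0,b)$.

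The key point is that $e$ controls $u$ only modulo the lower-order term $V(\gamma(t),t)-\bar\beta$, which a priori need not be bounded: I have $\tfrac12 u(t)=e(t)-(V(\gamma(t),t)-\bar\beta)$, and while $e(t)\le K$, the quantity $V(\gamma(t),t)-\bar\beta$ is only known to be nonnegative, so this does not immediately bound $u$. However, from $\tfrac12 u(t)\le e(t)\le K$ we directly obtain $u(t)\le 2K$ for all $t\in[0,b)$ — the two summands being nonnegative is exactly what makes this work. Hence $\dot\gamma([0,b))$ is bounded in $TM$, and, $(M,g)$ being complete, the closed metric balls are compact, so Lemma \ref{extend} (applied to the reduced vector field, i.e.\ via Proposition \ref{ptilde} and Lemma \ref{vector_field} in the non-autonomous setting) gives a convergent sequence of velocities $\dot\gamma(t_n)$ with $t_n\nearrow b$; therefore $\gamma$ extends past $b$, contradicting inextensibility. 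Thus every inextensible solution is forward complete.

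For backward completeness one applies the same argument to the reparametrization $\gamma^*(t)=\gamma(-t)$, which solves the analogous equation with $F$ replaced by $-F$ and $V(p,t)$ by $V(p,-t)$; note $-F$ still has bounded self-adjoint part, $V(\,\cdot\,,-t)>\beta_0(-t)$ remains bounded below along finite times, and the hypothesis on $|\partial V/\partial t|$ is symmetric under $t\mapsto -t$, so the same Grönwall estimate produces the required bound on $u$. Combining the two directions gives completeness. I expect the only subtle point to be the bookkeeping that ensures $e(t)$ genuinely dominates both $u(t)$ and $V(\gamma(t),t)-\bar\beta$ simultaneously — that is, verifying positivity of each summand so that the linear differential inequality $\dot e\le(2c_\gamma+\bar\alpha)e$ closes; everything else is a routine adaptation of Theorem \ref{A01}, which is why the authors relegate the full details to \cite{CRS}.
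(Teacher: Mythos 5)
Your argument is correct and is precisely the adaptation of ``the previous techniques'' that the authors allude to: the paper itself omits the proof of Proposition \ref{G01} (deferring details to \cite{CRS}), and your Gr\"onwall estimate on the energy $e(t)=\tfrac12 u(t)+V(\gamma(t),t)-\bar\beta$, whose two nonnegative summands are each separately dominated by $e$, is exactly what the lower bound $V>\beta_0$ is designed to enable --- it even bypasses the positively complete comparison machinery of Theorem \ref{A01}, since boundedness of $e$ now directly bounds $u$. The reduction of the backward case via $t\mapsto -t$ is also handled correctly.
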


\begin{proof}[Relation between Proposition \ref{G01} and Theorem \ref{A02}]
Proposition \ref{G01} impos\-es a strong re\-stric\-tion which was not
present in Theorem \ref{A02}, namely, the boundedness from below
(along finite times) of $V$. However, once this assumption is
admitted, Proposition \ref{G01} shows two remarkable properties:

(i) When $V$ grows fast towards infinity (say, in a superquadratic
way) such a fast growth is also permitted for $|\frac{\partial
V}{\partial t}|$, and

(ii) The lower bound for $V$ makes natural the assumption {\sl
``$V$ is a proper function on $\m\times\R$''} (i.e., the inverse
image $V^{-1}(K)$ is compact in $\m\times\R$ for any compact
subset $K$ in $\R$). Under this assumption, the completeness of
$g$ can be removed (recall footnote \ref{foot} or see \cite{CRS}
for details).
\end{proof}

As commented in the Introduction, the importance of the
completeness theorems stated in the present paper leans not only
upon themselves but also upon their applications in other fields,
for example in Lorentzian Geometry. In fact, we consider the
following application of the case of non--autonomous potentials to
an important class of spacetimes in General Relativity.

\begin{example}\label{eppwave} {\em Application to the geodesic
completeness of pp--waves }. The so--called {\em parallely
propagated waves}, or just {\em pp--waves}, are the relativistic
spacetimes on $\R^4$ endowed with the Lorentzian metric
\[
ds^2 \ =\ dx^2+dy^2 + 2dudv + H(x,y,u) du^2,
\]
where $(x,y,u,v)$ are the natural coordinates of $\R^4$.
It is known that the
geodesic completeness of these spacetimes is equivalent to the
completeness of the trajectories $\gamma(u)=(x(u),y(u))$ for the
(purely Riemannian) non--autonomous problem on $\R^2$:
\[
\ddot \gamma (u)\ =\ \frac12 \nabla^{\R^2}H(\gamma(u),u),
\]
where $V(x,y,u):= - H(x,y,u)/2$ plays the role of a time--dependent
potential with time coordinate $u$ (see \cite[Theorem 3.2]{CFS}
for more details). Therefore, all the results of Subsection
\ref{four} provide criteria which ensure the completeness of this
type of spacetimes.

In the particular case of the so--called {\em  plane waves} the
expression of $H$ is quadratic in $x, y$, i.e.:
\begin{equation} \label{ehpw}
H(x,y,u)\ =\ f_{11}(u) x^2 - f_{22}(u) y^2 + 2 f_{12}(u) xy,
\end{equation}
for some $C^2$--functions $f_{11}$, $f_{22}$, $f_{12}$. The completeness of plane
waves was known  because a direct integration of the geodesics is
possible (see for example \cite[Chapter 13]{BEE}). However, it can
be deduced easily from our results (recall that both Theorems
\ref{A02} and \ref{A1} are applicable, as $V=-H/2$ grows at most
quadratically along finite times and its $\nabla^{\R^2}$--gradient
grows at most linearly). This property is important because our
results also ensure: \begin{quote} {\em Any  pp--wave such that its
function $H$ behaves qualitatively as \eqref{ehpw} is geo\-de\-si\-cally
complete.} \end{quote} In fact, as claimed in \cite{FS_CQG},
physically realistic pp--waves must have a function  $|H|$ with a
growth at most quadratic along finite times (being the quadratic
case a limit case of the properly realistic subquadratic case).
Then, as an
interpretation of our result: 
{\em no physically realistic pp--wave develops singularities.} 
Such a property goes in the same direction that other
geometric properties on causality and boundaries for pp--waves,
developed in \cite{FS_CQG}, \cite{FS_JHEP}.
\end{example}

 \section*{Acknowledgement}
This paper is partially
supported by the {\sl Azione Integrata Italia--Spagna}
HI2008--0106. Furthermore, A.M. Candela has been partially supported by M.I.U.R. (research funds ex 40\% and
60\%) while A. Romero and M. S\'anchez are partially supported by Spanish
Grants with FEDER funds  MTM2010--18099 (MICINN) and P09--FQM--4496
(J. de Andaluc\'{\i}a).

\vspace{12mm}

\noindent
Anna Maria Candela\\
Dipartimento di Matematica, \\ Universit\`a degli Studi di Bari ``Aldo Moro'',\\
Via E. Orabona 4, 70125 Bari, Italy\\
e-mail: candela@dm.uniba.it

\vspace{9mm}

\noindent
Alfonso Romero$^\dagger$ and Miguel S\'anchez$^\ddagger$\\
Departamento de Geometr\'{\i}a y Topolog\'{\i}a,\\
Facultad de Ciencias, 
Universidad de Granada,\\
Avenida Fuentenueva s/n,\\
18071 Granada, Spain\\
e-mail addresses: $^\dagger$aromero@ugr.es,
$^\ddagger$sanchezm@ugr.es

\end{document}